\documentclass[11pt]{amsart}
\usepackage[a4paper,margin=1.12in]{geometry}
\usepackage{amsmath,amssymb,amsthm,mathtools,microtype}
\usepackage[colorlinks=true,linkcolor=blue,citecolor=blue,urlcolor=blue]{hyperref}
  
\newtheorem{theorem}{Theorem}[section]
\newtheorem{proposition}[theorem]{Proposition}
\newtheorem{lemma}[theorem]{Lemma}
\newtheorem{corollary}[theorem]{Corollary}
\theoremstyle{definition}

\theoremstyle{remark}
\newtheorem{remark}[theorem]{Remark}

\newcommand{\D}{\mathbb D}
\newcommand{\T}{\mathbb T}
\newcommand{\R}{\mathbb R}
\newcommand{\C}{\mathbb C}
\newcommand{\N}{\mathbb N}
\newcommand{\Hh}{\mathcal H}
\newcommand{\dd}{\,\mathrm d}
\newcommand{\e}{\mathrm e}
\newcommand{\SU}{\mathrm{SU}(1,1)}
\newcommand{\norm}[1]{\left\|#1\right\|}
\newcommand{\ip}[2]{\left\langle #1,#2\right\rangle}
\newcommand{\supp}{\operatorname{supp}}
\newcommand{\Rea}{\operatorname{Re}}

\title[The nonlinear Hausdorff--Young inequality]{The nonlinear Hausdorff--Young inequality}

\author[D. Suragan]{Durvudkhan Suragan}
\address{
	Durvudkhan Suragan:
	\endgraf
	Department of Mathematics
	\endgraf
	Nazarbayev University
	\endgraf
	Astana 010000
	\endgraf
	Kazakhstan
	\endgraf
	\textit{E-mail address:} \textrm{durvudkhan.suragan@nu.edu.kz}
}

\subjclass[2020]{Primary 42A38; Secondary  47B32, 34L25}
\keywords{nonlinear Hausdorff--Young inequality, nonlinear Fourier transform, $\mathrm{SU}(1,1)$ scattering}

\begin{document}

\begin{abstract}
We prove the constant-one discrete nonlinear Hausdorff--Young inequality. As a consequence, by a discrete-to-continuous limiting argument,
we obtain
\[
 \big\|(\log|a_f|^2)^{1/2}\big\|_{L^{p'}(\R)}
 \le \|f\|_{L^p(\R)},\qquad 1\le p<2,
\]
for all $f\in L^p(\R)$, where $a_f$ denotes the transmission coefficient of the nonlinear Fourier transform.
In particular, this resolves the Muscalu--Tao--Thiele uniformity problem. The proof uncovers a hidden Hilbert-space structure that reduces the nonlinear inequality to classical interpolation.
\end{abstract}

\maketitle

\section{Introduction and main results}

\subsection{About the problem}
The nonlinear Fourier transform considered here is the $\SU$ scattering
transform attached to the one-dimensional Dirac system.  We normalize the
continuous transform as follows.  For a compactly supported complex-valued
potential $f$ and spectral parameter $\xi\in\R$, let
\begin{equation}\label{eq:dirac-left}
 \partial_x
 \binom{a}{b}
 =
 \begin{pmatrix}
  0&\overline{f(x)}\e^{2\pi i x\xi}\\
  f(x)\e^{-2\pi i x\xi}&0
 \end{pmatrix}
 \binom{a}{b},
 \qquad
 \binom{a}{b}(-\infty,\xi)=\binom10.
\end{equation}
The terminal values are denoted by $a_f(\xi)$ and $b_f(\xi)$.  Since the
coefficient matrix in \eqref{eq:dirac-left} belongs to the Lie algebra
$\mathfrak{su}(1,1)$ of the matrix group $\SU$, the indefinite Hermitian form is conserved and
\begin{equation}\label{eq:SU-conservation}
 |a_f(\xi)|^2-|b_f(\xi)|^2=1.
\end{equation}
We use
\begin{equation}\label{eq:Hf-def}
 H_f(\xi):=\bigl(\log |a_f(\xi)|^2\bigr)^{1/2}.
\end{equation}
This is the standard logarithmic nonlinear Fourier amplitude. For details, we refer to the
lecture notes of Tao and Thiele \cite{TT} and the more recent works
\cite{Kovac12,OeS18,KOR19,KOR22}.

Two endpoint facts (see, e.g. \cite{Thiele04}) suggest a nonlinear Hausdorff--Young principle.  The
nonlinear Plancherel identity reads
\begin{equation}\label{eq:nonlin-plancherel}
 \norm{H_f}_{L^2(\R)}=\norm{f}_{L^2(\R)},
\end{equation}
and the nonlinear Riemann--Lebesgue inequality is
\begin{equation}\label{eq:L1Linf}
 \norm{H_f}_{L^\infty(\R)}\le \norm{f}_{L^1(\R)}.
\end{equation}
For each fixed $1<p<2$, Christ--Kiselev methods \cite{CK1,CK2} imply a nonlinear
Hausdorff--Young estimate
\begin{equation}\label{eq:knownNLHY}
 \norm{H_f}_{L^{p'}(\R)}\le C_p\norm{f}_{L^p(\R)},\quad p':=\frac{p}{p-1},
\end{equation}
with $C_p<\infty$. The known proofs, however, do not
interpolate the two endpoint statements and their constants deteriorate as
$p\uparrow2$. Throughout this paper, as usual, $p'$ denotes the conjugate exponent, with the
convention $1'=\infty$.

The question whether the constants in the nonlinear
Hausdorff--Young inequality \eqref{eq:knownNLHY} can be chosen uniformly as
$p\uparrow2$ was raised by Muscalu, Tao, and Thiele
\cite{MTTcantor}; see also \cite[Conjecture~2]{Thiele04}, \cite[Conjecture~1.1]{Kovac12}
and \cite[Question~2.1]{OeS17} for explicit
formulations of the problem.

The same question arises in the usual discrete $\SU$ Fourier product
\cite{MTTcantor,Kovac12,KOR22}.  Kova\v c \cite{Kovac12} proved uniformity for the Cantor
model.  Kova\v c, Oliveira e Silva and Rup\v ci\'c \cite{KOR19} obtained sharp
perturbative results for small continuous potentials, while
Oliveira e Silva \cite{OeS18} established a discrete variational inequality.  In the discrete model, Kova\v c, Oliveira e Silva and Rup\v ci\'c
\cite{KOR22} recorded both the uniform-constant question
and the sharper constant-$1$ inequality as (unsolved) open problems.

In this paper, we introduce a new approach.  Muscalu, Tao, and Thiele \cite{MTTcounter} showed that
an endpoint strategy based on estimating the individual multilinear terms
of the scattering expansion cannot succeed in the expected form.  The proof below avoids the multilinear expansion
altogether.  The logarithmic energy is first represented as a Hilbert-space
norm, and the full nonlinear Schur evolution becomes an affine unitary
cocycle.  Orthogonality is then recovered before taking the Hilbert norm.

The endpoint $p\uparrow2$ is also related to limiting versions of the
classical Hausdorff--Young inequality.  Nursultanov, Ghorbanalizadeh and the author
recently obtained sharp Herz--Bochkarev-type limiting estimates using
Lorentz spaces \cite{NGS25}.  Our result is of a different
nature, but it identifies a mechanism that remains stable at the Plancherel endpoint
(see~\cite{NGS25} also for a brief prehistory).

Connections between nonlinear Fourier analysis, Schur algorithms, and
quantum signal processing have developed rapidly in recent years.  In
particular, Alexis, Mnatsakanyan and Thiele \cite{AMT24} identified a direct link between
quantum signal processing and nonlinear Fourier analysis, and
Alexis, Lin, Mnatsakanyan, Thiele and Wang \cite{ALMTW26} subsequently established an infinite
quantum signal processing theory for arbitrary Szeg\H{o} functions.  These developments provide further evidence that the
nonlinear Fourier product is a natural analytic object beyond inverse
scattering.  

\subsection{Main results}
Let $F=(F_n)_{n\in\mathbb Z}$ have finite support and satisfy $|F_n|<1$.
Set
\begin{equation}\label{eq:ABdef}
 A_n=(1-|F_n|^2)^{-1/2},\qquad B_n=A_nF_n.
\end{equation}
For $t\in\T:=\R/\mathbb Z$ define the ordered product
\begin{equation}\label{eq:discrete-product}
 \begin{pmatrix}a(t)&b(t)\\\overline{b(t)}&\overline{a(t)}\end{pmatrix}
 =\mathop{\prod}^{\longrightarrow}_{n\in\supp F}
 \begin{pmatrix}
 A_n&B_n\e^{2\pi int}\\
 \overline{B_n}\e^{-2\pi int}&A_n
 \end{pmatrix},
\end{equation}
where multiplication is from left to right as $n$ increases.  This is the
convention of \cite{KOR22}. Its relation with the Schur algorithm and the theory of orthogonal polynomials on the unit circle (OPUC)
is classical; see Simon \cite{Simon1,Simon2}, as well as Tao and Thiele \cite{TT}.
Verblunsky's identity takes the form
\begin{equation}\label{eq:Verblunsky}
 \int_\T\log|a(t)|^2\dd t
 =\sum_{n\in\mathbb Z}\log A_n^2.
\end{equation}

Our first theorem settles the constant-one discrete nonlinear Hausdorff--Young problem formulated in \cite[(1.4)]{KOR22}.

\begin{theorem}\label{thm:discrete}
Let $F$ be finitely supported and $|F_n|<1$.  For the product
\eqref{eq:discrete-product} with \eqref{eq:ABdef}, 
\begin{equation}\label{eq:main-discrete}
 \big\|(\log|a|^2)^{1/2}\big\|_{L^{p'}(\T)}
 \le
 \big\|(\log A_n^2)^{1/2}\big\|_{\ell^p(\mathbb Z)}
\end{equation}
for all $1\le p\le2$.
For a sequence with a single nonzero coefficient equality holds for every
$p$.  For arbitrary finite sequences equality holds at $p=2$ by
\eqref{eq:Verblunsky}.
\end{theorem}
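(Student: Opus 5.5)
The plan is to linearize the problem by realizing the logarithmic energy as the squared norm of a vector in an auxiliary Hilbert space $\mathcal K$. We then apply vector-valued Riesz--Thorin interpolation to a \emph{linear} operator that is frozen along the given sequence $F$.

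Concretely, I would look for unitary operators $U_n(t)$ on $\mathcal K$, depending on $t$ and on the coefficients $F_m$, together with unit vectors $e_n\in\mathcal K$, such that
\[
 \log|a(t)|^2=\Big\|\sum_{n}(\log A_n^2)^{1/2}\,U_n(t)e_n\Big\|_{\mathcal K}^2 .
\]
Given such a representation, define the linear map $T_F(c)(t):=\sum_n c_n U_n(t)e_n$. The two endpoint estimates then read as follows.
\begin{itemize}
\item[(i)] $\|T_F c(t)\|_{\mathcal K}\le\sum_n|c_n|$ pointwise in $t$. This is immediate from unitarity.
\item[(ii)] $\int_\T\|T_Fc(t)\|^2_{\mathcal K}\dd t=\sum_n|c_n|^2$. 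This requires $\int_\T\langle U_n(t)e_n,U_m(t)e_m\rangle\dd t=0$ for $n\neq m$.
\end{itemize}
Riesz--Thorin for $\mathcal K$-valued functions gives $\|T_Fc\|_{L^{p'}(\T;\mathcal K)}\le\|c\|_{\ell^p}$ for $1\le p\le 2$. Evaluating at $c_n=(\log A_n^2)^{1/2}$ yields \eqref{eq:main-discrete}.

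To build $\mathcal K$, I would use the positive definite kernel
\[
 \log\frac{1}{1-z\overline w}=\sum_{k\ge1}\frac{(z\overline w)^k}{k}
\]
on $\D$, i.e.\ a Dirichlet-type space. For this space the $\SU$ Möbius action $g\mapsto g\circ\varphi-g(\varphi(0))$ is unitary, and the log-kernel transforms by an additive cocycle. A single factor then contributes $\log A_n^2=\log\frac1{1-|F_n|^2}=\|k_{F_n}\|^2$, where $k_w$ is the reproducing kernel at $w$. This is consistent with equality for one nonzero coefficient at every $p$.

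Using $|a|^2-|b|^2=1$, we have $\log|a|^2=\log\frac{1}{1-|b/a|^2}$. The Schur recursion for $b/a$ under right multiplication by the $n$-th factor is a Möbius map in $\SU$ applied to the previous Schur function, twisted by $\e^{2\pi i nt}$. The kernel cocycle should therefore produce an affine unitary recursion
\[
 \Phi_n(t)=W_n(t)\Phi_{n-1}(t)+(\log A_n^2)^{1/2}\,u_n(t),\qquad \|\Phi_n(t)\|^2=\log|a_n(t)|^2 .
\]
Unrolling this recursion gives the desired form with $U_n(t)e_n$ equal to $W_N\cdots W_{n+1}u_n$.

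The main obstacle is the orthogonality in (ii). Pointwise orthogonality fails, so it must come from integration in $t$. My approach would be to track Fourier support: the contribution of the $n$-th step should carry the phase $\e^{2\pi int}$ times a function analytic in the appropriate direction (one-sided frequencies, as in OPUC and Verblunsky's formula \eqref{eq:Verblunsky}). Then the cross terms with $n\ne m$ have mean zero on $\T$.

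A consistency check comes from the case $c_n=(\log A_n^2)^{1/2}$: there (ii) is exactly \eqref{eq:Verblunsky}. So I would first try to prove (ii) by mimicking the standard proof of \eqref{eq:Verblunsky}, namely harmonicity of $\log|a|$ with $a$ outer, and replacing the scalar mean-value argument by a vector-valued one. The remaining equality claims are direct: a single coefficient gives a constant $|a|=A_n$, and $p=2$ is \eqref{eq:Verblunsky}.
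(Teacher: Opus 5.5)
Your architecture is the paper's: the feature map of the kernel $-\log(1-z\overline w)$, the affine unitary action of disk automorphisms, the unrolled Schur recursion with increments of constant pointwise norm $\sqrt{c_j}$, and interpolation of the frozen synthesis operator. The gap is the step you yourself flag, the orthogonality (ii), and the mechanism you sketch for it fails as stated.

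Suppose each increment has the form $\e^{2\pi int}h_n(t)$ with $h_n$ of one-sided spectrum. Then
$\ip{\e^{2\pi int}h_n(t)}{\e^{2\pi imt}h_m(t)}=\e^{2\pi i(n-m)t}\sum_k h_{n,k}(t)\overline{h_{m,k}(t)}$.
The product of a one-sided function with the conjugate of another one-sided function has two-sided spectrum. For example, with $n-m=1$ and $h_n=\e^{-2\pi it}$, $h_m=1$, the cross term is $1$. So one-sidedness of each increment separately does not force cross terms to have mean zero.

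Your fallback does not close this either. A vector-valued version of the harmonicity proof of \eqref{eq:Verblunsky} controls only the \emph{sum} of the cross terms, and only for the particular weights $c_n=(\log A_n^2)^{1/2}$. The $L^2$ endpoint with constant $1$ for \emph{all} $c$ needs more. It forces the Gram matrix $\bigl(\int_\T\ip{U_n(t)e_n}{U_m(t)e_m}\dd t\bigr)_{n,m}$, which has unit diagonal and must be $\le I$, to equal $I$. So each cross term must vanish separately.

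The missing idea is to use pointwise unitarity of the cocycle \emph{before} looking at frequencies.

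\begin{enumerate}
\item Gauge the Schur function by $s_k=\e^{-2\pi in_kt}\,b_k/a_k$. Equivalently, use $U_{\zeta\alpha}=D_{\zeta,1}U_\alpha D_{\overline\zeta,1}$ and $\Phi(\zeta\alpha)=D_{\zeta,1}\Phi(\alpha)$. The recursion becomes $\Phi(s_k)=\beta_k+U_kD_k(t)\Phi(s_{k-1})$. Here $\beta_k=\Phi(F_{n_k})$ and $U_k$ are \emph{constant}, and $D_k(t)$ multiplies the $m$th coordinate by $\e^{-2\pi imd_kt}$ with $d_k=n_k-n_{k-1}\ge1$.
\item For $j<k$, the common prefix $U_ND_N(t)\cdots U_{k+1}D_{k+1}(t)$ of $v_j$ and $v_k$ is unitary for each fixed $t$, so it cancels pointwise. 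This gives $\ip{v_j(t)}{v_k(t)}=\ip{g_{j,k}(t)}{\beta_k}$ with $g_{j,k}=U_kD_k(t)\cdots U_{j+1}D_{j+1}(t)\beta_j$. All the $t$-dependence now sits on one side.
\item Consider the closed subspace of $L^2(\T;\Hh)$ of functions $G$ with $\widehat G(\ell)=0$ for every $\ell\ge0$. It contains $D_{j+1}(t)\beta_j$. It is invariant under constant unitaries and under $D_{\e^{-2\pi it},d}$ for $d\ge1$.
\item Hence $g_{j,k}$ lies in this subspace, and $\int_\T\ip{g_{j,k}(t)}{\beta_k}\dd t=\ip{\widehat{g_{j,k}}(0)}{\beta_k}=0$.
\end{enumerate}

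With this orthogonality lemma in place, the rest of your plan goes through as in the paper.
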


The continuous consequence of Theorem \ref{thm:discrete} solves the Muscalu--Tao--Thiele uniformity
problem.

\begin{theorem}\label{thm:continuous}
Let $1\le p\le2$ and $f\in L^1(\R)\cap L^p(\R)$.  Then the
Dirac scattering transform \eqref{eq:dirac-left} satisfies
\begin{equation}\label{eq:main-continuous}
 \big\|(\log|a_f|^2)^{1/2}\big\|_{L^{p'}(\R)}
 \le \norm{f}_{L^p(\R)}.
\end{equation}
At $p=2$ equality holds.
\end{theorem}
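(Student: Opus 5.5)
We derive Theorem~\ref{thm:continuous} from Theorem~\ref{thm:discrete} by discretization and a limiting argument; we take Theorem~\ref{thm:discrete} as given. First let $f$ be continuous with compact support, say in $[-R,R]$. For a mesh size $\delta>0$ and $N=1/\delta$ (an integer), we set
\[
F_n=\tanh\Bigl(\Bigl|\int_{n\delta}^{(n+1)\delta} f\Bigr|\Bigr)\cdot \operatorname{sgn}\Bigl(\int_{n\delta}^{(n+1)\delta} f\Bigr).
\]
This choice gives $\log A_n^2=-\log(1-|F_n|^2)$, so $(\log A_n^2)^{1/2}$ is comparable to $|\int_{I_n} f|$ up to $O(|\int_{I_n}f|^3)$. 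The continuous transfer matrix over a cell $I_n$ is $\exp$ of an $\mathfrak{su}(1,1)$ element plus commutator errors of size $O(\delta^2\|f\|_\infty^2)$. Replacing it by the single discrete factor with frequency $\e^{2\pi i n t}$, where $t=\delta\xi$, costs $O(\delta\cdot\delta|\xi|\,\|f\|_\infty)$ per cell. Multiplying over the $O(R/\delta)$ cells, the product \eqref{eq:discrete-product} then approximates $(a_f(\xi),b_f(\xi))$ uniformly for $\xi$ in compact sets, as $\delta\to0$. Since $a_f$ is $C^1$-stable, the matrix products stay bounded because $\|f\|_{L^1}<\infty$ gives $\prod A_n\le \e^{\|f\|_1}$.

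Next we rescale. With $t=\delta\xi$, the discrete $a^{(\delta)}(t)$ is $1$-periodic in $t$, i.e.\ $1/\delta$-periodic in $\xi$. We have
\[
\|H^{(\delta)}\|_{L^{p'}(\T)}=\delta^{1/p'}\Bigl(\int_{-1/(2\delta)}^{1/(2\delta)}|H^{(\delta)}(\delta\xi)|^{p'}\dd\xi\Bigr)^{1/p'},
\]
while
\[
\|(\log A_n^2)^{1/2}\|_{\ell^p}\approx\Bigl(\sum_n|\int_{I_n}f|^p\Bigr)^{1/p}\le \delta^{1-1/p}\|f\|_{L^p}
\]
by H\"older, since $|\int_{I_n} f|^p\le\delta^{p-1}\int_{I_n}|f|^p$. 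Here the cubic correction is $o(1)$ relative to the main term, because $\max_n|\int_{I_n}f|\to0$. Since $1/p'=1-1/p$, the powers of $\delta$ cancel. Theorem~\ref{thm:discrete} thus gives, for every fixed $M$,
\[
\Bigl(\int_{-M}^M|H^{(\delta)}(\delta\xi)|^{p'}\dd\xi\Bigr)^{1/p'}\le(1+o(1))\|f\|_{L^p}.
\]
Letting $\delta\to0$ via the uniform convergence on $[-M,M]$, and then $M\to\infty$ by monotone convergence, yields \eqref{eq:main-continuous} for such $f$. The case $p'=\infty$ is the same with suprema.

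For general $f\in L^1\cap L^p$ we approximate by $f_k\in C_c$ with $f_k\to f$ in both $L^1$ and $L^p$. The map $f\mapsto a_f(\xi)$ is continuous from $L^1$ pointwise in $\xi$, by Gronwall applied to \eqref{eq:dirac-left}. Fatou's lemma then passes the inequality to the limit. Equality at $p=2$ is the Plancherel identity \eqref{eq:nonlin-plancherel}.

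The main obstacle is the consistency step: showing that the per-cell error, which grows with $|\xi|$, is uniformly controlled on compact $\xi$-sets after the $\delta$-rescaling. This requires ensuring that the ordered product of $1/\delta$ near-identity $\SU$ matrices accumulates only $o(1)$ error. We plan to handle it by a Duhamel/Gronwall estimate in the group, bounding the norms of all partial products by $\e^{\|f\|_1}$.
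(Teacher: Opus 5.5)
Your proof is correct and has the same skeleton as the paper's: discretize, apply Theorem~\ref{thm:discrete}, rescale by $t=\delta\xi$, pass to the limit on $[-M,M]$ and let $M\to\infty$, then approximate in $L^1\cap L^p$ and use Fatou, with the nonlinear Plancherel identity for equality at $p=2$. The discretization, however, is genuinely different.

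The paper point-samples, $F_n=h\overline{g(nh)}$, so each factor is $A_{n,h}(I+hW_g(nh,\xi))$, an Euler step with a scalar correction. Its consistency estimate uses the modulus of continuity $\omega_g$, and its right-hand side reaches $\norm{g}_p$ only as a limit of Riemann sums. Both steps need $g\in C_c(\R)$, and it must restrict $h\le 1/(2\norm{g}_\infty)$ to keep $|F_n|<1$.

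Your choice $F_n=\tanh|c_n|\operatorname{sgn}c_n$ with $c_n=\int_{I_n}f$ makes each factor exactly the exponential of the frequency-frozen cell generator, with $A_n=\cosh|c_n|$ and $|F_n|<1$ automatically. This buys two things.
\begin{itemize}
\item Since $2\log\cosh x\le x^2$, H\"older gives $\norm{(\log A_n^2)^{1/2}}_{\ell^p}\le\delta^{1/p'}\norm{f}_p$ exactly for every $\delta$, so your $(1+o(1))$ can be dropped.
\item The per-cell error is bounded by a constant times $(\int_{I_n}|f|)^2+|\xi|\delta\int_{I_n}|f|$. This sums to $o(1)$ for any compactly supported $f\in L^1$, so neither continuity nor boundedness of $f$ is used. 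You could therefore discretize the truncations $f\mathbf 1_{[-R,R]}$ directly and skip the density of $C_c$.
\end{itemize}
The phrase about $a_f$ being ``$C^1$-stable'' should go. What you actually use is that each discrete factor has norm $\e^{|c_n|}$, so all partial products are bounded by $\e^{\norm{f}_1}$.

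One step needs a justification you omitted: the ordering. The left system $\partial_xP=WP$ puts later cells on the left, whereas \eqref{eq:discrete-product} puts them on the right. Also, your $F_n$ is proportional to $\int_{I_n}f$ rather than its conjugate. Consequently your factor at $t=\delta\xi$ matches the \emph{transposed} cell matrix at frequency $-\xi$. The product then approximates $P_f^{\mathrm L}(+\infty,-\xi)^{T}$, whose $(1,1)$ entry is $a_f(-\xi)$, which for complex $f$ generally differs from $a_f(\xi)$ in modulus. Here you use that $(1,1)$ entries are invariant under transposition; the paper handles the same issue via the right flow and Lemma~\ref{lem:left-right}. Since you only integrate over the symmetric sets $[-M,M]$ and $\R$, the reflection is harmless once it is stated.

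Finally, at $p=1$ replace Fatou by the pointwise bound $H_f(\xi)=\lim_k H_{f_k}(\xi)\le\norm{f}_1$.
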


For $1\le p<2$ the space $L^p(\R)$ is not contained in $L^1(\R)$, and the
scattering transform of a general $f\in L^p(\R)$ is defined only through a
limiting procedure.  Write
\begin{equation}\label{eq:truncation}
 f_R:=f\,\mathbf 1_{[-R,R]},\qquad R>0.
\end{equation}
By H\"older's inequality,
\[
 \norm{f_R}_{L^1(\R)}
 \le (2R)^{1/p'}\norm{f_R}_{L^p(\R)}
 \le (2R)^{1/p'}\norm{f}_{L^p(\R)},
\]
so $f_R\in L^1(\R)\cap L^p(\R)$ and $a_{f_R}$ is defined by
\eqref{eq:dirac-left} in the classical sense.  Christ and Kiselev
\cite{CK1,CK2} showed that for every $f\in L^p(\R)$ with $1\le p<2$ the
scattering data of the truncated potentials converge pointwise almost
everywhere. In particular,
\begin{equation}\label{eq:CK-limit}
 a_f(\xi):=\lim_{R\to\infty}a_{f_R}(\xi)
\end{equation}
exists for almost every $\xi\in\R$.  This is the standard meaning of the nonlinear Fourier transform on
$L^p(\R)$; see also \cite{TT}.  The endpoint pointwise-convergence problem for
a version of the Dirac scattering transform was subsequently settled by
Poltoratski \cite{Poltoratski24}.  Theorem \ref{thm:continuous} extends to
the $L^p$ setting for $p<2$.

\begin{corollary}\label{cor:Lp}
Let $1\le p<2$ and let $f\in L^p(\R)$, with $a_f$ given by
\eqref{eq:CK-limit}.  Then
\begin{equation}\label{eq:main-Lp}
 \big\|(\log|a_f|^2)^{1/2}\big\|_{L^{p'}(\R)}
 \le\norm{f}_{L^p(\R)}.
\end{equation}
\end{corollary}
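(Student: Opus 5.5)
We deduce Corollary~\ref{cor:Lp} from Theorem~\ref{thm:continuous}, applied to the truncations $f_R$ of \eqref{eq:truncation}, together with the almost-everywhere convergence \eqref{eq:CK-limit} and Fatou's lemma. No new analytic input is needed beyond these ingredients.

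First, for each $R>0$ the truncation $f_R$ lies in $L^1(\R)\cap L^p(\R)$, by the H\"older estimate displayed before \eqref{eq:CK-limit}. Theorem~\ref{thm:continuous} therefore gives
\[
 \big\|(\log|a_{f_R}|^2)^{1/2}\big\|_{L^{p'}(\R)}\le\norm{f_R}_{L^p(\R)}\le\norm{f}_{L^p(\R)}
\]
uniformly in $R$. Next, by \eqref{eq:CK-limit} we have $a_{f_R}(\xi)\to a_f(\xi)$ for almost every $\xi$. By \eqref{eq:SU-conservation}, $|a_{f_R}(\xi)|\ge1$, so $|a_f(\xi)|\ge1$ as well. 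Since $z\mapsto(\log|z|^2)^{1/2}$ is continuous on $\{|z|\ge1\}$, we obtain
\[
 H_{f_R}(\xi)\to H_f(\xi)\quad\text{for a.e. }\xi.
\]
Here we need $|a_f(\xi)|<\infty$ a.e., which is part of the assertion that the Christ--Kiselev limit exists; at points where $|a_f|=\infty$ we interpret $H_f=\infty$, and lower semicontinuity still holds. It then suffices to take $R\to\infty$ along a sequence $R_k\to\infty$.

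For $1<p<2$, so that $p'<\infty$, Fatou's lemma gives
\[
 \int H_f^{p'}\le\liminf_k\int H_{f_{R_k}}^{p'}\le\norm{f}_p^{p'}.
\]
For $p=1$ and $p'=\infty$, the pointwise a.e. bound $H_{f_{R_k}}(\xi)\le\norm{f}_1$ passes to the limit, giving $\norm{H_f}_\infty\le\norm{f}_1$. Alternatively, the case $p=1$ follows directly, since $f\in L^1$ and Theorem~\ref{thm:continuous} already applies to $f$ itself; this requires checking that the classical $a_f$ agrees with the limit \eqref{eq:CK-limit}, which is standard continuity of the ODE solution in $L^1$ potentials.

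The only delicate point is the identification of $a_f$ as an a.e. pointwise limit along a full family or a sequence of $R$, and the finiteness of $|a_f|$. Both are supplied by the Christ--Kiselev theorem as quoted, so the main obstacle is merely bookkeeping: ensuring the limiting object in \eqref{eq:main-Lp} is exactly the one in \eqref{eq:CK-limit}. Lower semicontinuity via Fatou then handles everything without requiring any convergence in norm.
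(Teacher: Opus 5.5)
Your proposal is correct and follows the paper's own proof. You apply Theorem~\ref{thm:continuous} to the truncations $f_R$ to get an $R$-uniform bound, use the Christ--Kiselev a.e.\ convergence together with $|a_{f_R}|\ge1$ and the continuity of $z\mapsto(\log|z|^2)^{1/2}$ on $\{|z|\ge1\}$, and conclude with Fatou's lemma along a sequence $R_m\to\infty$. The only difference is at $p=1$, where the paper simply invokes Theorem~\ref{thm:continuous} for $f$ itself. You rightly note that this tacitly identifies the classical $a_f$ with the truncation limit, which follows from Lemmas~\ref{lem:L1stability} and~\ref{lem:left-right}, and your alternative limiting argument avoids that identification altogether.
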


In particular, the constants $C_p$ of \eqref{eq:knownNLHY} may be taken equal
to $1$ for every $1<p<2$, which answers the Muscalu--Tao--Thiele uniformity
problem in the affirmative.

\subsection{About the proof}
The proof is based on the reproducing kernel $-\log(1-z\overline w)$ of the
Dirichlet space of functions vanishing at the origin.  Define
\begin{equation}\label{eq:Phi-intro}
 \Phi(w)=\left(w,\frac{w^2}{\sqrt2},\frac{w^3}{\sqrt3},\ldots\right)
 \in\ell^2(\N),\qquad w\in\D,
\end{equation}
where $\D:=\{z\in\mathbb C:|z|<1\}$ is the open unit disk.
Then
\begin{equation}\label{eq:Phi-energy-intro}
 \norm{\Phi(w)}_{\ell^2}^2=-\log(1-|w|^2).
\end{equation}
The crucial fact is that every disk automorphism acts on these feature
vectors by an affine complex-unitary map.  After a
simple gauge, the reflection coefficient of \eqref{eq:discrete-product}
obeys a Schur recursion.  Iterating the affine identity decomposes
$\Phi$ of the final reflection coefficient into Hilbert-valued increments
$v_j$.  Each increment has constant pointwise Hilbert norm, and distinct
increments are orthogonal in $L^2(\T;\ell^2)$ because all relative
Fourier frequencies are strictly one-sided.  Once the nonlinear background
is frozen, the associated synthesis operator is an ordinary linear map
with endpoint norms $1$ from $\ell^1$ to $L^\infty(\ell^2)$ and from
$\ell^2$ to $L^2(\ell^2)$.  A direct Hadamard three-lines argument then
proves Theorem \ref{thm:discrete}.

The continuous theorem is obtained by sampling a compactly supported
potential at mesh size $h$, choosing $F_n=h\overline{f(nh)}$, and evaluating
the discrete transform at $t=h\xi$.  The resulting ordered product is a
first-order approximation to the right Dirac flow.  We prove the required
uniform-on-compacts convergence directly, keep track of the exact scaling
of the discrete right-hand side, and then pass to the limit.

\medskip

Section \ref{sec:prelim} contains the preliminaries.  The discrete and
continuous theorems are proved in Sections \ref{sec:discrete-proof} and
\ref{sec:continuous-limit}, respectively.

\section{Preliminaries}\label{sec:prelim}

\subsection{The logarithmic kernel}
Let
\[
 \Hh:=\ell^2(\N)
 =\left\{x=(x_m)_{m\ge1}:\sum_{m\ge1}|x_m|^2<\infty\right\},
\]
with inner product linear in the first variable.  The map $\Phi$ is defined
by \eqref{eq:Phi-intro}.  The kernel below is the reproducing kernel of the
Dirichlet subspace $\mathcal D_0:=\{h\in\mathcal D:h(0)=0\}$; see,
for example, \cite[Chapters~1-2]{EFKMR14}.

\begin{lemma}\label{lem:kernel}
For $u,v\in\D$,
\begin{equation}\label{eq:kernel-identity}
 \ip{\Phi(u)}{\Phi(v)}
 =\sum_{m=1}^\infty\frac{(u\overline v)^m}{m}
 =-\log(1-u\overline v).
\end{equation}
Consequently,
\begin{equation}\label{eq:distance-identity}
 \norm{\Phi(u)-\Phi(v)}_{\Hh}^2
 =\log\frac{|1-u\overline v|^2}
 {(1-|u|^2)(1-|v|^2)}.
\end{equation}
\end{lemma}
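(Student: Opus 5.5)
The identity \eqref{eq:kernel-identity} is a direct computation with the coordinates of $\Phi$. By \eqref{eq:Phi-intro}, the $m$-th coordinate of $\Phi(w)$ is $w^m/\sqrt m$. With the inner product on $\Hh$ linear in the first variable,
\[
 \ip{\Phi(u)}{\Phi(v)}=\sum_{m\ge1}\frac{u^m}{\sqrt m}\,\overline{\Bigl(\frac{v^m}{\sqrt m}\Bigr)}=\sum_{m\ge1}\frac{(u\overline v)^m}{m}.
\]
We first check that $\Phi(w)\in\Hh$ for $w\in\D$. Indeed $\sum_m |w|^{2m}/m<\infty$ because $|w|<1$. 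So the inner product is well defined, and the series above converges absolutely because $|u\overline v|\le|u||v|<1$.

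We then identify the sum using the power series of the principal branch of the logarithm, $-\log(1-z)=\sum_{m\ge1}z^m/m$. This series is valid on $\D$, and we apply it with $z=u\overline v\in\D$. Here $\log$ is the principal branch, which is well defined since $\Rea(1-z)>0$ on $\D$. This gives \eqref{eq:kernel-identity}. The special case $u=v=w$ also recovers \eqref{eq:Phi-energy-intro}, since then $1-|w|^2>0$ and the logarithm is real.

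For \eqref{eq:distance-identity}, expand the square of the norm:
\[
 \norm{\Phi(u)-\Phi(v)}^2=\norm{\Phi(u)}^2+\norm{\Phi(v)}^2-2\Rea\ip{\Phi(u)}{\Phi(v)}.
\]
Substitute \eqref{eq:kernel-identity} into each of the three terms. The two norm terms give $-\log(1-|u|^2)$ and $-\log(1-|v|^2)$. The cross term uses $\Rea\bigl(-\log(1-u\overline v)\bigr)=-\log|1-u\overline v|$, which holds because the real part of the principal logarithm of a nonzero complex number is the logarithm of its modulus. Hence
\[
 -2\Rea\ip{\Phi(u)}{\Phi(v)}=2\log|1-u\overline v|=\log|1-u\overline v|^2.
\]
Adding the three terms gives $\log\bigl(|1-u\overline v|^2/((1-|u|^2)(1-|v|^2))\bigr)$, which is \eqref{eq:distance-identity}.

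There is no real obstacle here. The only points needing care are the branch of the logarithm, which is settled by the convergent power series on $\D$, and the conjugation convention of the inner product. The conjugation convention only decides whether the result is $u\overline v$ or $\overline u v$, and it does not affect the real part used in the distance formula.
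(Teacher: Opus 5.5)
Your proposal is correct and follows the paper's own argument: compute the inner product coordinatewise, identify the absolutely convergent series with the power series of $-\log(1-z)$ at $z=u\overline v\in\D$, then expand $\norm{\Phi(u)-\Phi(v)}^2$ and use $\Rea\log(1-u\overline v)=\log|1-u\overline v|$. Your extra checks (that $\Phi(w)\in\Hh$ and that $\Rea(1-z)>0$ on $\D$, so the principal branch applies) are sound details that the paper leaves implicit.
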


\begin{proof}
Since $|u\overline v|<1$, the series in \eqref{eq:kernel-identity} converges
absolutely and equals the analytic power series for $-\log(1-z)$.  Finally,
\[
 \norm{\Phi(u)-\Phi(v)}^2
 =\norm{\Phi(u)}^2+\norm{\Phi(v)}^2
 -2\Rea\ip{\Phi(u)}{\Phi(v)},
\]
and substitution of \eqref{eq:kernel-identity} yields
\eqref{eq:distance-identity}.
\end{proof}

For $\alpha\in\D$ set
\begin{equation}\label{eq:phi-alpha}
 \varphi_\alpha(w):=\frac{\alpha+w}{1+\overline\alpha w}.
\end{equation}
This is a disk automorphism carrying $0$ to $\alpha$.
\begin{proposition}\label{prop:affine}
For every $\alpha\in\D$ there exists a unique complex-linear unitary
operator $U_\alpha:\Hh\to\Hh$ such that
\begin{equation}\label{eq:affine-action}
 \Phi(\varphi_\alpha(w))
 =\Phi(\alpha)+U_\alpha\Phi(w),\qquad w\in\D.
\end{equation}
The operator $U_\alpha$ depends only on $\alpha$.
\end{proposition}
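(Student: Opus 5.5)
The plan is to build $U_\alpha$ from Gram-matrix data via Lemma \ref{lem:kernel}. The standard principle is this: if two families $\{x_w\}_{w\in\D}$ and $\{y_w\}_{w\in\D}$ in $\Hh$ have the same Gram matrix, $\ip{x_u}{x_v}=\ip{y_u}{y_v}$ for all $u,v$, then $\sum c_j x_{w_j}\mapsto\sum c_j y_{w_j}$ is a well-defined, complex-linear isometry. It is well defined because the norm of any finite combination depends only on the Gram data, so a combination of the $x$'s vanishes exactly when the matching combination of the $y$'s does. We apply this with $x_w=\Phi(w)$ and $y_w=\Phi(\varphi_\alpha(w))-\Phi(\alpha)$. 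The isometry then extends by continuity from $\overline{\operatorname{span}}\,\Phi(\D)$. It is unitary once both families are shown to be total in $\Hh$.

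\emph{Step 1 (Gram identity).} We must show
\[
\ip{\Phi(\varphi_\alpha(u))-\Phi(\alpha)}{\Phi(\varphi_\alpha(v))-\Phi(\alpha)}=-\log(1-u\overline v).
\]
Expanding with \eqref{eq:kernel-identity}, the left side is
\[
-\log\bigl(1-\varphi_\alpha(u)\overline{\varphi_\alpha(v)}\bigr)+\log\bigl(1-\varphi_\alpha(u)\overline\alpha\bigr)+\log\bigl(1-\alpha\overline{\varphi_\alpha(v)}\bigr)-\log(1-|\alpha|^2).
\]
We then insert the elementary identities
\[
1-\varphi_\alpha(u)\overline{\varphi_\alpha(v)}=\frac{(1-|\alpha|^2)(1-u\overline v)}{(1+\overline\alpha u)(1+\alpha\overline v)},\qquad 1-\varphi_\alpha(u)\overline\alpha=\frac{1-|\alpha|^2}{1+\overline\alpha u}.
\]
Formally, the logarithms then cancel down to $-\log(1-u\overline v)$.

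\emph{Main obstacle: branches of the logarithm.} The only delicate point is justifying the additivity of these principal logarithms. To avoid any argument bookkeeping, I would argue as follows. Both sides are holomorphic in $(u,\overline v)\in\D\times\D$, since each term is a convergent power series in $u$ and $\overline v$. Their exponentials agree by the algebraic identities above. Both sides also vanish at $u=v=0$, because $\varphi_\alpha(0)=\alpha$. Hence their difference is a continuous function with values in $2\pi i\mathbb Z$ on a connected set that vanishes at one point, so it is identically zero.

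\emph{Step 2 (totality).} If $x\perp\Phi(w)$ for all $w$, then $\sum_m \overline{x_m}\,w^m/\sqrt m\equiv0$ on $\D$, so every $x_m=0$. For the second family, suppose $x\perp\Phi(\varphi_\alpha(w))-\Phi(\alpha)$ for all $w$. Since $\varphi_\alpha$ maps $\D$ onto $\D$, the function $z\mapsto\ip{\Phi(z)}{x}$ is then constant on $\D$. It vanishes at $z=0$, so it is identically zero, and the first case gives $x=0$. Thus the isometry from Step 1 has dense range, and its range is closed because it is an isometry, so $U_\alpha$ is unitary. By construction $U_\alpha\Phi(w)=\Phi(\varphi_\alpha(w))-\Phi(\alpha)$, which is \eqref{eq:affine-action}.

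\emph{Uniqueness.} A complex-linear bounded operator is determined by its values on the total set $\Phi(\D)$, and \eqref{eq:affine-action} prescribes exactly these values. The only data entering the construction are $\varphi_\alpha$ and $\Phi(\alpha)$, so $U_\alpha$ depends only on $\alpha$.
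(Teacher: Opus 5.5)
Your proposal is correct and follows the paper's proof essentially step by step. Both arguments use the same Möbius identities to establish Gram preservation, and both rule out a $2\pi i\mathbb Z$ branch discrepancy by continuity on the connected set $\D\times\D$ together with vanishing at $(0,0)$. The totality arguments for domain and range also coincide, using surjectivity of $\varphi_\alpha$ and $\Phi(0)=0$, and uniqueness comes from density of $\operatorname{span}\Phi(\D)$ in both.
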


\begin{proof}
For $u,v\in\D$ one has
\begin{align}
 1-\varphi_\alpha(u)\overline{\varphi_\alpha(v)}
 &=\frac{(1-|\alpha|^2)(1-u\overline v)}
 {(1+\overline\alpha u)(1+\alpha\overline v)},
 \label{eq:mobius-prod}\\
 1-\varphi_\alpha(u)\overline\alpha
 &=\frac{1-|\alpha|^2}{1+\overline\alpha u},
 \label{eq:mobius-alpha}\\
 1-\alpha\overline{\varphi_\alpha(v)}
 &=\frac{1-|\alpha|^2}{1+\alpha\overline v}.
 \label{eq:mobius-alpha2}
\end{align}
Using Lemma \ref{lem:kernel}, the desired preservation of complex Gram
matrices is equivalent to
\begin{align}
&-\log\bigl(1-\varphi_\alpha(u)
 \overline{\varphi_\alpha(v)}\bigr)
 +\log\bigl(1-\varphi_\alpha(u)\overline\alpha\bigr)
 \notag\\
&\qquad
 +\log\bigl(1-\alpha\overline{\varphi_\alpha(v)}\bigr)
 -\log(1-|\alpha|^2)
 =-\log(1-u\overline v).
 \label{eq:gram-log}
\end{align}
Equations \eqref{eq:mobius-prod}--\eqref{eq:mobius-alpha2} show that the
exponentials of the two sides agree.  Because logarithmic additivity may
otherwise carry a $2\pi i$ ambiguity, we spell out the branch argument.
Let $L(u,v)$ denote the difference between the two sides of
\eqref{eq:gram-log}, each logarithm being the analytic branch determined by
its convergent power series at the origin.  Then $L$ is continuous on the
connected set $\D\times\D$, while the exponential identities imply
$L(u,v)\in2\pi i\mathbb Z$.  Hence $L$ is constant.  Evaluating at
$(u,v)=(0,0)$ gives $L=0$.

Therefore
\begin{equation}\label{eq:gram-preserved}
 \ip{\Phi(\varphi_\alpha(u))-\Phi(\alpha)}
 {\Phi(\varphi_\alpha(v))-\Phi(\alpha)}
 =\ip{\Phi(u)}{\Phi(v)}.
\end{equation}
Define on the algebraic span of $\Phi(\D)$
\[
 U_\alpha\Bigl(\sum_{j=1}^Jc_j\Phi(w_j)\Bigr)
 :=\sum_{j=1}^Jc_j
 \bigl(\Phi(\varphi_\alpha(w_j))-\Phi(\alpha)\bigr).
\]
Identity \eqref{eq:gram-preserved} shows simultaneously that this definition
is independent of the chosen representation and that $U_\alpha$ preserves
the complex inner product.  Thus it extends uniquely to an isometry on the
closure of $\operatorname{span}\Phi(\D)$.

We now verify that this closed span is all of $\Hh$.  If
$x=(x_m)_{m\ge1}\in\Hh$ is orthogonal to every $\Phi(w)$, then
\[
 0=\ip{x}{\Phi(w)}
 =\sum_{m\ge1}x_m\frac{\overline w^{\,m}}{\sqrt m},
 \quad w\in\D.
\]
The right-hand side is an anti-holomorphic function of $w$ whose Taylor
coefficients are $x_m/\sqrt m$; all coefficients vanish, and hence $x=0$.
So the domain is dense.

To see that the range is dense, suppose $x$ is orthogonal to
$\Phi(\varphi_\alpha(w))-\Phi(\alpha)$ for every $w$.  Since
$\varphi_\alpha$ is onto $\D$, the function
\[
 z\longmapsto\ip{x}{\Phi(z)}
\]
is constant on $\D$.  Its value at $z=0$ is zero, because $\Phi(0)=0$.
Thus $x\perp\Phi(\D)$ and hence $x=0$.  The range of an isometry is closed,
so it is all of $\Hh$.  Therefore $U_\alpha$ is unitary.  Uniqueness follows
from density of $\operatorname{span}\Phi(\D)$.
\end{proof}

\begin{remark}\label{rem:coeff-U}
With the convention that $(U_\alpha)_{n,m}$ denotes the entry in row $n$
and column $m$, expansion of \eqref{eq:affine-action} in powers of $w$ gives
\begin{equation}\label{eq:Ucoeff}
 (U_\alpha)_{n,m}
 =\sqrt{\frac{m}{n}}\,[w^m]\,\varphi_\alpha(w)^n,
 \qquad m,n\ge1.
\end{equation}
 Proposition \ref{prop:affine} may also be
viewed as the centered kernel form of conformal invariance of the classical
Dirichlet seminorm (see e.g. \cite{EFKMR14}).
\end{remark}

For a unimodular $\zeta\in\C$ and $d\in\N$, define the diagonal unitary
\begin{equation}\label{eq:Dzd}
 D_{\zeta,d}(x_1,x_2,\ldots)
 =(\zeta^dx_1,\zeta^{2d}x_2,\zeta^{3d}x_3,\ldots).
\end{equation}
Then
\begin{equation}\label{eq:rotation-Phi}
 \Phi(\zeta^dw)=D_{\zeta,d}\Phi(w).
\end{equation}
Equations \eqref{eq:affine-action} and \eqref{eq:rotation-Phi} are the only
geometric facts required in the proof.

\medskip

Now we derive the Schur recursion in the ordering of
\eqref{eq:discrete-product}.  Although this computation is elementary, we
include it for completeness.  The recursion is consistent
with the standard Szeg\H{o}/Schur formalism (cf.~\cite{Simon1,TT,KOR22}).

If $F\equiv0$, then $a\equiv1$ and Theorem \ref{thm:discrete} is
immediate.  Henceforth assume that $F$ has nonempty support, and write
\[
 n_0<n_1<\cdots<n_N
\]
for its support.  For $0\le k\le N$ put
\begin{equation}\label{eq:Pk}
 P_k(t)
 :=\prod_{j=0}^{k}\!{}^{\longrightarrow}
 \begin{pmatrix}
  A_{n_j}&B_{n_j}\e^{2\pi in_jt}\\
  \overline{B_{n_j}}\e^{-2\pi in_jt}&A_{n_j}
 \end{pmatrix}
 =\begin{pmatrix}a_k&b_k\\\overline b_k&\overline a_k\end{pmatrix}.
\end{equation}
By right multiplication with the new factor,
\begin{align}
 a_k
 &=A_{n_k}a_{k-1}
 +\overline{B_{n_k}}\e^{-2\pi in_kt}b_{k-1},
 \label{eq:akrec}\\
 b_k
 &=B_{n_k}\e^{2\pi in_kt}a_{k-1}
 +A_{n_k}b_{k-1}.
 \label{eq:bkrec}
\end{align}
For the first factor one has
$a_0=A_{n_0}$ and $b_0=B_{n_0}\e^{2\pi in_0t}$.

Since $P_k(t)\in\SU$, $|a_k|^2-|b_k|^2=1$ and therefore
\[
 r_k(t):=\frac{b_k(t)}{a_k(t)}\in\D.
\]
Dividing \eqref{eq:bkrec} by \eqref{eq:akrec} and using $B_n=A_nF_n$
gives
\begin{equation}\label{eq:rrec-exact}
 r_k(t)
 =\frac{F_{n_k}\e^{2\pi in_kt}+r_{k-1}(t)}
 {1+\overline{F_{n_k}}\e^{-2\pi in_kt}r_{k-1}(t)}.
\end{equation}
Set
\begin{equation}\label{eq:s-gauge}
 s_k(t):=\e^{-2\pi in_kt}r_k(t).
\end{equation}
For $k\ge1$ write $d_k=n_k-n_{k-1}\ge1$.  Then
\begin{equation}\label{eq:Schur}
 s_0(t)=F_{n_0},\qquad
 s_k(t)=\varphi_{F_{n_k}}
 \bigl(\e^{-2\pi id_kt}s_{k-1}(t)\bigr).
\end{equation}
Thus the full nonlinear product is an iteration of disk automorphisms
separated by one-sided rotations.

The scattering energy is indeed the logarithmic disk energy of $s_k$:
\begin{equation}\label{eq:energy-sk}
 1-|s_k(t)|^2
 =1-\frac{|b_k(t)|^2}{|a_k(t)|^2}
 =|a_k(t)|^{-2},
\end{equation}
so
\begin{equation}\label{eq:energy-Phi-sk}
 \norm{\Phi(s_k(t))}_{\Hh}^2
 =-\log(1-|s_k(t)|^2)
 =\log|a_k(t)|^2.
\end{equation}

\section{Proof of Theorem \ref{thm:discrete}}\label{sec:discrete-proof}

Set, for $0\le k\le N$,
\begin{equation}\label{eq:alphacbeta}
 \alpha_k:=F_{n_k},\qquad
 c_k:=-\log(1-|\alpha_k|^2),\qquad
 \beta_k:=\Phi(\alpha_k).
\end{equation}
By Lemma \ref{lem:kernel},
\begin{equation}\label{eq:beta-norm}
 \norm{\beta_k}_{\Hh}^2=c_k=\log A_{n_k}^2.
\end{equation}
Since $n_k$ belongs to the support of $F$ we have $F_{n_k}\ne0$, and
$|F_{n_k}|<1$ by hypothesis; thus $0<|\alpha_k|<1$ and
\begin{equation}\label{eq:ck-positive}
 c_k>0,\qquad 0\le k\le N.
\end{equation}
Let $U_k:=U_{\alpha_k}$ be the unitary from Proposition
\ref{prop:affine} and, for $k\ge1$, put
\begin{equation}\label{eq:Dk}
 D_k(t):=D_{\e^{-2\pi it},d_k}.
\end{equation}
Then \eqref{eq:Schur}, \eqref{eq:affine-action} and
\eqref{eq:rotation-Phi} imply
\begin{equation}\label{eq:Phi-Schur}
 \Phi(s_k(t))=\beta_k+U_kD_k(t)\Phi(s_{k-1}(t)).
\end{equation}
Iterating \eqref{eq:Phi-Schur} gives
\begin{equation}\label{eq:cocycle-sum}
 \Phi(s_N(t))=\sum_{j=0}^Nv_j(t),
\end{equation}
where
\begin{equation}\label{eq:vj-def}
 v_N(t)=\beta_N
\end{equation}
and, for $0\le j<N$,
\begin{equation}\label{eq:vj-def2}
 v_j(t)
 =U_ND_N(t)U_{N-1}D_{N-1}(t)\cdots
 U_{j+1}D_{j+1}(t)\beta_j.
\end{equation}
Every factor in \eqref{eq:vj-def2} is unitary for each fixed $t$.
Consequently,
\begin{equation}\label{eq:v-constantnorm}
 \norm{v_j(t)}_{\Hh}=\norm{\beta_j}_{\Hh}=\sqrt{c_j},
 \quad t\in\T.
\end{equation}

\medskip

\begin{lemma}\label{lem:orthogonality}
For $j\ne k$,
\begin{equation}\label{eq:pair-orthogonal}
 \int_\T\ip{v_j(t)}{v_k(t)}\dd t=0.
\end{equation}
Hence, by \eqref{eq:ck-positive}, the functions
\begin{equation}\label{eq:ej-def}
 e_j(t):=\frac{v_j(t)}{\sqrt{c_j}},\qquad 0\le j\le N,
\end{equation}
form an orthonormal system in $L^2(\T;\Hh)$ and satisfy
$\norm{e_j(t)}_{\Hh}=1$ pointwise.
\end{lemma}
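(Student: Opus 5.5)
The plan is to factor off the common unitary prefix of $v_j$ and $v_k$, and then show that what remains is an $\Hh$-valued function whose Fourier spectrum is strictly negative, paired against a constant vector. By symmetry of \eqref{eq:pair-orthogonal} under complex conjugation, assume $j<k$.

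\textbf{Factoring the common prefix.} For $k<N$ put $M(t):=U_ND_N(t)\cdots U_{k+1}D_{k+1}(t)$, and put $M\equiv I$ if $k=N$. Then by \eqref{eq:vj-def}--\eqref{eq:vj-def2},
\[
 v_k(t)=M(t)\beta_k,\qquad v_j(t)=M(t)\,U_kD_k(t)X(t),
\]
where $X(t):=U_{k-1}D_{k-1}(t)\cdots U_{j+1}D_{j+1}(t)\beta_j$, with $X\equiv\beta_j$ when $j=k-1$. Since $M(t)$ is unitary for each $t$, and $U_k$ is a constant unitary,
\[
 \ip{v_j(t)}{v_k(t)}=\ip{D_k(t)X(t)}{U_k^*\beta_k}.
\]
Put $y:=U_k^*\beta_k\in\Hh$, a constant vector. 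It then suffices to show
\[
 \int_\T\ip{D_k(t)X(t)}{y}\dd t=\ip{\widehat{D_kX}(0)}{y}=0,
\]
where $\widehat{Y}(n):=\int_\T Y(t)\e^{-2\pi int}\dd t\in\Hh$ is the Bochner Fourier coefficient of a bounded measurable $Y:\T\to\Hh$. All functions involved are continuous and bounded, since the $D_i(t)$ are strongly continuous in $t$.

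\textbf{Spectral invariance.} Let $\mathcal S_-$ be the class of bounded continuous $Y:\T\to\Hh$ with $\widehat Y(n)=0$ for all $n>0$. I would check two closure properties.
\begin{itemize}
 \item[(i)] For a constant bounded operator $T$ one has $\widehat{TY}(n)=T\widehat Y(n)$, because bounded operators commute with the Bochner integral. Hence $\mathcal S_-$ is invariant under each $U_i$.
 \item[(ii)] For $D_i(t)=D_{\e^{-2\pi it},d_i}$, the $m$-th coordinate satisfies $(D_i(t)Y(t))_m=\e^{-2\pi imd_it}Y_m(t)$. Therefore $\widehat{(D_iY)_m}(n)=\widehat{Y_m}(n+md_i)$. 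If $Y\in\mathcal S_-$ this vanishes whenever $n+md_i>0$, in particular for all $n\ge 0$, because $md_i\ge1$.
\end{itemize}
So $D_i$ maps $\mathcal S_-$ into the stronger class $\mathcal S_{--}$ of functions whose Fourier coefficients vanish for $n\ge0$. Coordinatewise Fourier coefficients suffice here, since $\widehat Y(n)$ has coordinates $\widehat{Y_m}(n)$.

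\textbf{Conclusion of orthogonality.} The constant $\beta_j$ lies in $\mathcal S_-$, so by induction $X\in\mathcal S_-$. Applying (ii) with $i=k$, which uses $d_k\ge1$, gives $\widehat{D_kX}(0)=0$. This proves \eqref{eq:pair-orthogonal}.

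\textbf{Orthonormal system.} By \eqref{eq:v-constantnorm} and \eqref{eq:ck-positive}, $e_j$ is well defined and $\norm{e_j(t)}_\Hh=1$ pointwise. Hence $\int_\T\norm{e_j}^2\dd t=1$, and together with \eqref{eq:pair-orthogonal} the $e_j$ form an orthonormal system in $L^2(\T;\Hh)$.

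The main obstacle is step (ii) together with the infinite-dimensionality of the $U_i$. A constant unitary mixes coordinates, so one cannot track the frequencies of individual coordinates through the product. The fix is to track only the one-sided support of the vector-valued Fourier coefficients, which is preserved by (i) and shifted strictly negative by each rotation $D_i$.
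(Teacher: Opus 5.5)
Your proof is correct and follows essentially the paper's argument: cancel the common unitary prefix, then use that constant unitaries preserve one-sided vector-valued Fourier support while each rotation $D_i$ pushes the spectrum strictly negative. The only cosmetic difference is that you propagate the class with nonpositive spectrum and invoke strictness only at the last rotation $D_k$ (after moving $U_k$ across as $U_k^*$), whereas the paper obtains strict negativity already from $D_{j+1}\beta_j$ and propagates the strictly negative class $\mathcal N_-$.
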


\begin{proof}
We use vector-valued Fourier coefficients in the Bochner sense.  For
$G\in L^2(\T;\Hh)$ define
\[
 \widehat G(\ell)=\int_\T G(t)\e^{-2\pi i\ell t}\dd t\in\Hh.
\]
Let
\begin{equation}\label{eq:Nminus}
 \mathcal N_-:=\{G\in L^2(\T;\Hh):\widehat G(\ell)=0
 \text{ for every }\ell\ge0\}.
\end{equation}
Since the Fourier coefficient maps are continuous,
$\mathcal N_-$ is a closed subspace.

We record two invariances.

\emph{First}, if $U$ is a constant unitary on $\Hh$ and
$G\in\mathcal N_-$, then
\[
 \widehat{UG}(\ell)=U\widehat G(\ell)=0,\quad \ell\ge0,
\]
so $UG\in\mathcal N_-$.  Note that the
operators $U_k$ depend on the coefficients $F$ but not on $t$.

\emph{Second}, for any positive integer $d$, the multiplication operator
$D_{\e^{-2\pi it},d}$ preserves $\mathcal N_-$.  Indeed, if
$G=(G_m)_{m\ge1}$, then the $m$th coordinate is multiplied by
$\e^{-2\pi imdt}$.  Thus its scalar spectrum is translated by $-md\le-1$.
Every strictly negative frequency remains strictly negative.

Since $\ip{v_k(t)}{v_j(t)}=\overline{\ip{v_j(t)}{v_k(t)}}$, it suffices to
treat $j<k$.  Now fix $j<k$.  In the pointwise inner product
$\ip{v_j(t)}{v_k(t)}$, cancel the common left prefix
\[
 U_ND_N(t)\cdots U_{k+1}D_{k+1}(t),
\]
which is unitary for each fixed $t$.  We obtain
\begin{equation}\label{eq:g-jk}
 \ip{g_{j,k}(t)}{\beta_k},
\end{equation}
where
\[
 g_{j,k}(t)
 =U_kD_k(t)U_{k-1}D_{k-1}(t)\cdots
 U_{j+1}D_{j+1}(t)\beta_j.
\]
The vector $D_{j+1}(t)\beta_j$ belongs to $\mathcal N_-$: in its $m$th
coordinate the only frequency is $-md_{j+1}\le-1$.  Applying successively
the two invariances just proved shows that $g_{j,k}\in\mathcal N_-$.  Hence
$\widehat g_{j,k}(0)=0$, and therefore
\[
 \int_\T\ip{g_{j,k}(t)}{\beta_k}\dd t
 =\ip{\widehat g_{j,k}(0)}{\beta_k}=0.
\]
This proves \eqref{eq:pair-orthogonal}.  The final assertion follows from
\eqref{eq:v-constantnorm}.
\end{proof}

We now interpolate a linear synthesis operator obtained by freezing the
nonlinear background.  We include the direct three-lines argument to avoid
any ambiguity about a vector-valued $L^\infty$ endpoint.  The scalar
three-lines theorem and this standard interpolation device can be found in \cite[Chapter V]{SteinWeiss71}, for example.

\begin{lemma}\label{lem:interpolation}
Let $(e_j)_{j\in J}$ be a finite orthonormal system in
$L^2(\T;\Hh)$ such that $\norm{e_j(t)}_{\Hh}\le1$ for every $j$ and a.e.
$t$.  Define
\[
 Tx(t)=\sum_{j\in J}x_je_j(t).
\]
Then, for $1\le p\le2$, we have 
\begin{equation}\label{eq:synthesis-HY}
 \norm{Tx}_{L^{p'}(\T;\Hh)}\le\norm{x}_{\ell^p(J)}.
\end{equation}
\end{lemma}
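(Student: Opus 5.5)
The plan is to prove the two endpoint bounds and then interpolate by a three-lines argument written out directly, so that no abstract vector-valued Riesz--Thorin theorem is needed.

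\emph{The endpoint $p=1$.} Here $p'=\infty$. For a.e.\ $t$ the triangle inequality in $\Hh$ and the pointwise bound $\norm{e_j(t)}_{\Hh}\le1$ give
\[
 \norm{Tx(t)}_{\Hh}\le\sum_{j\in J}|x_j|=\norm{x}_{\ell^1}.
\]

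\emph{The endpoint $p=2$.} Orthonormality of $(e_j)$ in $L^2(\T;\Hh)$ gives, by Pythagoras,
\[
 \norm{Tx}_{L^2(\T;\Hh)}^2=\sum_{j\in J}|x_j|^2 .
\]

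\emph{Interpolation for $1<p<2$.} Choose $\theta\in(0,1)$ with $1/p=(1-\theta)+\theta/2$, so that $1/p'=\theta/2$. By homogeneity we may assume $\norm{x}_{\ell^p}=1$. We use the duality $L^{p'}(\T;\Hh)^*\supseteq L^p(\T;\Hh)$, which is norming. Since $J$ is finite, $Tx$ is a finite combination of $L^2(\T;\Hh)\subset L^1$ functions. It therefore suffices to show
\[
 \Bigl|\int_\T\ip{Tx(t)}{g(t)}\dd t\Bigr|\le1
\]
for every simple $\Hh$-valued function $g=\sum_i \chi_{E_i}\,\lambda_i h_i$ with $\norm{g}_{L^p(\T;\Hh)}=1$, where the $E_i$ are disjoint sets, $\lambda_i>0$ and $\norm{h_i}_{\Hh}=1$. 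One should check that simple functions suffice for norming the $L^{p'}(\Hh)$ norm: truncate $Tx$ to sets where its norm is bounded, and use separability of $\Hh$ to approximate its direction field.

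Write $x_j=|x_j|\omega_j$ with $|\omega_j|=1$. For $z$ in the strip $0\le\Rea z\le1$ define
\[
 x_j(z)=|x_j|^{p(1-z/2)}\omega_j,\qquad g_z=\sum_i\chi_{E_i}\lambda_i^{p(1-z)/2\cdot\ldots}h_i .
\]
The second exponent is chosen as follows: we want $\norm{g_z}_{L^1}\le1$ on $\Rea z=0$ (dual to $L^\infty$) and $\norm{g_z}_{L^2}\le1$ on $\Rea z=1$, with $g_\theta=g$. This gives the exponent $p\bigl((1-z)+z/2\bigr)$ applied to $\lambda_i$. Set
\[
 \Psi(z)=\int_\T\ip{T x(z)(t)}{g_{\bar z}(t)}\dd t .
\]
This is a finite sum of terms of the form (constant)$\times\exp(\text{linear in }z)$, so it is entire and bounded on the strip.

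On $\Rea z=0$, Hölder with the $p=1$ endpoint gives
\[
 |\Psi|\le\norm{x(z)}_{\ell^1}\,\norm{g}_{L^1}^{\ldots}\le1,
\]
because $|x_j(z)|=|x_j|^p$. On $\Rea z=1$, Cauchy--Schwarz with the $p=2$ endpoint gives
\[
 |\Psi|\le\norm{x(z)}_{\ell^2}\,\norm{g_{\bar z}}_{L^2}=1,
\]
because $|x_j(z)|=|x_j|^{p/2}$. The three-lines theorem then yields $|\Psi(\theta)|\le1$, which is the claimed inequality.

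I expect the main obstacle to be the bookkeeping in this step. One has to set the exponents correctly, and to put the conjugation on $g_{\bar z}$ so that $\Psi$ is holomorphic even though the inner product is conjugate-linear in its second slot. One must also justify that testing against simple $L^p(\Hh)$ functions computes the $L^{p'}(\Hh)$ norm. For finitely many $e_j$ this reduces to a standard approximation.
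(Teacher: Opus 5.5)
Your proposal is correct and follows essentially the same route as the paper. Both prove the endpoints via the pointwise triangle inequality and Pythagoras, then run a hand-written three-lines argument against simple $\Hh$-valued test functions, using the same exponent $p(1-z/2)$ on $x$ and on the test function (with $\bar z$ in the latter so that $\Psi$ is holomorphic) and the fact that $L^p(\T;\Hh)$ norms $L^{p'}(\T;\Hh)$. The only differences are cosmetic: you note that $\Psi$ is a finite exponential sum, so boundedness on the strip is immediate where the paper uses majorants and Morera, and your garbled display for $g_z$ should simply read $\lambda_i^{p(1-z/2)}$.
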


\begin{proof}
The endpoints are immediate.  Pointwise triangle inequality gives
\begin{equation}\label{eq:endpoint-1}
 \norm{Tx}_{L^\infty(\T;\Hh)}\le\norm{x}_{\ell^1},
\end{equation}
while orthonormality gives
\begin{equation}\label{eq:endpoint-2}
 \norm{Tx}_{L^2(\T;\Hh)}=\norm{x}_{\ell^2}.
\end{equation}

Fix $1<p<2$ and choose $0<\theta<1$ so that
\begin{equation}\label{eq:theta-rel}
 \frac1p=1-\frac\theta2,
 \qquad
 \frac{1}{p'}=\frac\theta2.
\end{equation}
By homogeneity it is enough to prove
\eqref{eq:synthesis-HY} for finite $x$ and against simple $\Hh$-valued
functions $G$, with
\begin{equation}\label{eq:normed-xG}
 \norm{x}_{\ell^p}=1,
 \qquad
 \norm{G}_{L^p(\T;\Hh)}=1.
\end{equation}
Write $\operatorname{sgn}x_j=x_j/|x_j|$ when $x_j\ne0$ and $0$ otherwise, and
$\operatorname{sgn}G(t)=G(t)/\norm{G(t)}$ when $G(t)\ne0$ and $0$ otherwise.
Let
\[
 S:=\{z\in\C:\ 0\le\Rea z\le1\}
\]
and, for $z\in S$, define
\begin{align}
 x_j(z)
 &:=\operatorname{sgn}x_j\,
 |x_j|^{p(1-z/2)},
 \label{eq:xz}\\
 G_z(t)
 &:=\operatorname{sgn}G(t)\,
 \norm{G(t)}^{p(1-\overline z/2)},
 \label{eq:Gz}
\end{align}
where $a^{w}:=\e^{w\log a}$ for $a>0$ and $0^{w}:=0$.  The latter convention is
consistent because the exponent has positive real part throughout $S$:
\begin{equation}\label{eq:exponent-range}
 \Rea\Bigl(p\Bigl(1-\frac z2\Bigr)\Bigr)
 =p\Bigl(1-\frac{\Rea z}{2}\Bigr)\in\Bigl[\frac p2,\ p\Bigr],
 \qquad z\in S.
\end{equation}

Since $\norm{G(t)}$ is real and nonnegative we have
$\overline{\norm{G(t)}^{\,p(1-\overline z/2)}}=\norm{G(t)}^{\,p(1-z/2)}$, and the
inner product is linear in its first variable and conjugate linear in its
second.  Hence the function
\begin{equation}\label{eq:Psi-strip}
 \Psi(z):=\int_\T\ip{T x(z)(t)}{G_z(t)}\dd t
 =\sum_{j\in J}x_j(z)\int_\T
 \norm{G(t)}^{\,p(1-z/2)}\,
 \ip{e_j(t)}{\operatorname{sgn}G(t)}\dd t
\end{equation}
is a finite sum of products of entire functions of $z$ with integrals of
$z$-analytic integrands.

We claim that $\Psi$ is analytic in the interior of $S$, continuous on $S$, and
\emph{bounded on all of} $S$.  Indeed, for $a\ge0$ and $\tfrac p2\le s\le p$ one
has $a^{s}\le\max(a^{p/2},a^{p})\le1+a^{p}$, so \eqref{eq:exponent-range} gives
the $z$-independent majorants
\begin{equation}\label{eq:strip-domination}
 |x_j(z)|\le1+|x_j|^{p},
 \qquad
 \norm{G_z(t)}_{\Hh}\le1+\norm{G(t)}_{\Hh}^{p},
 \qquad z\in S.
\end{equation}
The second majorant is integrable on $\T$ by \eqref{eq:normed-xG}, so
analyticity in the interior of $S$ and continuity on $S$ follow from
\eqref{eq:Psi-strip} by dominated convergence together with Morera's theorem,
the index set $J$ being finite.  Furthermore, using $\norm{e_j(t)}_{\Hh}\le1$,
the Cauchy--Schwarz inequality in $\Hh$, \eqref{eq:strip-domination},
\eqref{eq:normed-xG} and $|\T|=1$,
\begin{equation}\label{eq:Psi-bounded}
 |\Psi(z)|
 \le\Bigl(\sum_{j\in J}|x_j(z)|\Bigr)
 \int_\T\norm{G_z(t)}_{\Hh}\dd t
 \le\bigl(\#J+\norm{x}_{\ell^p}^{p}\bigr)
 \bigl(1+\norm{G}_{L^p(\T;\Hh)}^{p}\bigr)
 =2\bigl(\#J+1\bigr)
\end{equation}
for every $z\in S$.  The bound \eqref{eq:Psi-bounded} is crude; only its
independence of $z$ matters, and it is indeed the boundedness hypothesis
required by Hadamard's three-lines theorem.

If $z=iy$, then
\[
 \norm{x(iy)}_{\ell^1}
 =\sum_j|x_j|^p=1,
 \qquad
 \norm{G_{iy}}_{L^1}
 =\int_\T\norm{G(t)}^p\dd t=1.
\]
Using \eqref{eq:endpoint-1} and $L^\infty$--$L^1$ duality,
\begin{equation}\label{eq:Psi-left}
 |\Psi(iy)|\le1.
\end{equation}
If $z=1+iy$, then
\[
 \norm{x(1+iy)}_{\ell^2}^2=\sum_j|x_j|^p=1,
 \qquad
 \norm{G_{1+iy}}_{L^2}^2
 =\int_\T\norm{G(t)}^p\dd t=1.
\]
By \eqref{eq:endpoint-2} and the Cauchy--Schwarz inequality,
\begin{equation}\label{eq:Psi-right}
 |\Psi(1+iy)|\le1.
\end{equation}
Hadamard's three-lines theorem therefore yields $|\Psi(\theta)|\le1$.
By \eqref{eq:theta-rel}, $x(\theta)=x$ and $G_\theta=G$.  Thus
\begin{equation}\label{eq:duality-bound}
 \left|\int_\T\ip{Tx(t)}{G(t)}\dd t\right|\le1
\end{equation}
for all simple $G$ with $L^p$ norm one.  Since $1<p<\infty$ and $\Hh$ is a
separable Hilbert space, the usual Bochner-space duality
$(L^{p'}(\T;\Hh))^*=L^p(\T;\Hh)$ applies (see
\cite[Chapter IV]{DiestelUhl77}).  Taking the supremum in
\eqref{eq:duality-bound} gives \eqref{eq:synthesis-HY}.  The endpoint cases
are \eqref{eq:endpoint-1} and \eqref{eq:endpoint-2}.
\end{proof}

\begin{proof}[Proof of Theorem \ref{thm:discrete}]
 With the orthonormal
system \eqref{eq:ej-def}, define
\[
 T_Fx(t):=\sum_jx_je_j(t).
\]
The subscript emphasizes that $e_j$ depends on the original nonlinear
background $F$.  Once $F$ is fixed, however, $T_F$ is an ordinary linear
operator.  Lemma \ref{lem:interpolation} yields
\begin{equation}\label{eq:TFinterpolated}
 \norm{T_Fx}_{L^{p'}(\T;\Hh)}\le\norm{x}_{\ell^p}.
\end{equation}
Choose $x_j=\sqrt{c_j}$.  By \eqref{eq:cocycle-sum},
\[
 T_Fx=\sum_jv_j=\Phi(s_N).
\]
By \eqref{eq:energy-Phi-sk},
\[
 \norm{\Phi(s_N(t))}_{\Hh}
 =\bigl(\log|a(t)|^2\bigr)^{1/2}.
\]
The right-hand side of \eqref{eq:TFinterpolated} is
\[
 \left(\sum_jc_j^{p/2}\right)^{1/p}
 =\left(\sum_n[-\log(1-|F_n|^2)]^{p/2}\right)^{1/p}.
\]
This proves \eqref{eq:main-discrete}.  If only one coefficient is nonzero,
then $\log|a(t)|^2$ is constant and both sides coincide.  At $p=2$ the
proof reduces to Pythagoras' theorem for the orthogonal decomposition
\eqref{eq:cocycle-sum}, which is \eqref{eq:Verblunsky}.
\end{proof}

\begin{remark}\label{rem:no-nonlinear-interp}
No nonlinear interpolation principle has been used.  The dependence of the
orthonormal system $(e_j)$ on $F$ is frozen before Lemma
\ref{lem:interpolation} is applied.  Only after the linear estimate is
proved do we insert the particular coefficient vector
$x_j=\sqrt{-\log(1-|F_{n_j}|^2)}$.
\end{remark}

\section{Proof of Theorem \ref{thm:continuous}}\label{sec:continuous-limit}

We now transfer Theorem \ref{thm:discrete} to the continuous Dirac
scattering problem.  

\subsection{Left and right Dirac evolutions}
For $g\in L^1(\R)$ and $\xi\in\R$, define
\begin{equation}\label{eq:Wdef}
 W_g(x,\xi)
 :=\begin{pmatrix}
 0&\overline{g(x)}\e^{2\pi ix\xi}\\
 g(x)\e^{-2\pi ix\xi}&0
 \end{pmatrix}.
\end{equation}
We denote by $P_g^{\mathrm R}(x,\xi)$ the unique solution of the Volterra
integral equation
\begin{equation}\label{eq:right-Volterra}
 P_g^{\mathrm R}(x,\xi)
 =
 I+\int_{-\infty}^{x}
 P_g^{\mathrm R}(s,\xi)W_g(s,\xi)\,\dd s.
\end{equation}
Equivalently, $P_g^{\mathrm R}$ is the absolutely continuous solution of
\begin{equation}\label{eq:right-ODE}
 \partial_xP_g^{\mathrm R}(x,\xi)
 =
 P_g^{\mathrm R}(x,\xi)W_g(x,\xi),
 \qquad
 P_g^{\mathrm R}(-\infty,\xi)=I.
\end{equation}
Existence and uniqueness follow from the standard Volterra iteration.
Moreover, since
\[
 \norm{W_g(x,\xi)}=|g(x)|,
\]
Gronwall's inequality gives, uniformly in $x,\xi\in\R$,
\begin{equation}\label{eq:right-basic-bound}
 \norm{P_g^{\mathrm R}(x,\xi)}
 \le
 \exp\!\left(\int_{-\infty}^{x}|g(s)|\,\dd s\right)
 \le \e^{\norm{g}_{L^1}}.
\end{equation}
In particular,
\[
 \int_{\R}
 \norm{P_g^{\mathrm R}(s,\xi)W_g(s,\xi)}\,\dd s
 \le
 \e^{\norm{g}_{L^1}}\norm{g}_{L^1},
\]
and therefore the limit
\begin{equation}\label{eq:right-terminal-limit}
 P_g^{\mathrm R}(+\infty,\xi)
 :=
 \lim_{x\to+\infty}P_g^{\mathrm R}(x,\xi)
\end{equation}
exists in matrix norm for every $\xi\in\R$.  More precisely,
\begin{equation}\label{eq:right-tail-bound}
 \norm{
 P_g^{\mathrm R}(+\infty,\xi)-P_g^{\mathrm R}(x,\xi)}
 \le
 \e^{\norm{g}_{L^1}}
 \int_x^\infty |g(s)|\,\dd s,
\end{equation}
uniformly in $\xi$.

The flow preserves the group $\SU$.  Indeed, with $J:=\operatorname{diag}(1,-1)$
one checks directly from \eqref{eq:Wdef} that
$W_g(x,\xi)J+JW_g(x,\xi)^*=0$ and $\operatorname{tr}W_g(x,\xi)=0$, whence
\[
 \partial_x\bigl(P_g^{\mathrm R}J(P_g^{\mathrm R})^*\bigr)
 =P_g^{\mathrm R}\bigl(W_gJ+JW_g^*\bigr)(P_g^{\mathrm R})^*=0,
 \qquad
 \partial_x\det P_g^{\mathrm R}
 =\bigl(\det P_g^{\mathrm R}\bigr)\operatorname{tr}W_g=0.
\]
Since $P_g^{\mathrm R}(-\infty,\xi)=I$, this gives
$P_g^{\mathrm R}J(P_g^{\mathrm R})^*\equiv J$ and $\det P_g^{\mathrm R}\equiv1$, that is,
$P_g^{\mathrm R}(x,\xi)\in\SU$ for all $x$ and $\xi$; letting $x\to+\infty$, the
same holds for the terminal matrix \eqref{eq:right-terminal-limit}.

We may therefore write
\begin{equation}\label{eq:right-terminal}
 P_g^{\mathrm R}(+\infty,\xi)
 =
 \begin{pmatrix}
 a_g^{\mathrm R}(\xi)&b_g^{\mathrm R}(\xi)\\
 \overline{b_g^{\mathrm R}(\xi)}&
 \overline{a_g^{\mathrm R}(\xi)}
 \end{pmatrix},
\end{equation}
and, since $\det P_g^{\mathrm R}(+\infty,\xi)=1$,
\begin{equation}\label{eq:right-SU-relation}
 |a_g^{\mathrm R}(\xi)|^2-|b_g^{\mathrm R}(\xi)|^2=1.
\end{equation}
Thus the right transmission coefficient $a_g^{\mathrm R}$ is well defined
for every $g\in L^1(\R)$
(see the scattering discussion of
\cite{TT}).
We write $P_f^{\mathrm L}(x,\xi)$ for the corresponding solution of the left
system, that is, the matrix solution of
\[
 \partial_xP_f^{\mathrm L}(x,\xi)
 =\begin{pmatrix}
 0&\overline{f(x)}\e^{2\pi ix\xi}\\
 f(x)\e^{-2\pi ix\xi}&0
 \end{pmatrix}P_f^{\mathrm L}(x,\xi),
 \qquad
 P_f^{\mathrm L}(-\infty,\xi)=I ;
\]
its first column is the solution $(a,b)$ of \eqref{eq:dirac-left}, so that
$a_f$ is the $(1,1)$ entry of $P_f^{\mathrm L}(+\infty,\xi)$.

\begin{lemma}\label{lem:left-right}
For every $g\in L^1(\R)$,
\begin{equation}\label{eq:right-left-exact}
 a_g^{\mathrm R}(\xi)=a_{\overline g}(-\xi),
\end{equation}
where $a_{\overline g}$ is defined by the left system
\eqref{eq:dirac-left}.
\end{lemma}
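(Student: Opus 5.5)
The plan is to relate the right evolution to the left evolution by an explicit symmetry: transpose, then conjugate entries by a fixed matrix. Transposing \eqref{eq:right-ODE} gives
\[
 \partial_x\bigl(P_g^{\mathrm R}\bigr)^T=W_g^T\bigl(P_g^{\mathrm R}\bigr)^T,
 \qquad
 W_g(x,\xi)^T=\begin{pmatrix}0&g(x)\e^{-2\pi ix\xi}\\ \overline{g(x)}\e^{2\pi ix\xi}&0\end{pmatrix}.
\]
This is exactly the coefficient matrix of the left system \eqref{eq:dirac-left} with potential $\overline g$ and spectral parameter $-\xi$. Indeed, writing $h=\overline g$ and $\eta=-\xi$, the left matrix is
\[
 \begin{pmatrix}0&\overline{h}\e^{2\pi ix\eta}\\ h\e^{-2\pi ix\eta}&0\end{pmatrix}
 =\begin{pmatrix}0&g\e^{-2\pi ix\xi}\\ \overline g\e^{2\pi ix\xi}&0\end{pmatrix}.
\]
Hence $(P_g^{\mathrm R}(x,\xi))^T$ solves the left matrix ODE for $\overline g$ at $-\xi$, with initial value $I^T=I$ at $-\infty$.

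The first step is to make this rigorous through the Volterra formulation, since the paper defines $P_g^{\mathrm R}$ through \eqref{eq:right-Volterra}. Transposing that equation gives
\[
 Q(x)=I+\int_{-\infty}^xW_g(s,\xi)^TQ(s)\,\dd s,
 \qquad Q(x):=P_g^{\mathrm R}(x,\xi)^T.
\]
The left solution $P_{\overline g}^{\mathrm L}(x,-\xi)$ satisfies the same Volterra equation, because $\overline g\in L^1$. Uniqueness of bounded solutions of this Volterra equation then gives $Q=P_{\overline g}^{\mathrm L}(\cdot,-\xi)$. Uniqueness follows from the same Gronwall argument as \eqref{eq:right-basic-bound}: the difference $\Delta$ of two solutions satisfies $\|\Delta(x)\|\le\int_{-\infty}^x|g|\,\|\Delta\|$, which forces $\Delta\equiv0$.

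Next I let $x\to+\infty$. The limit \eqref{eq:right-terminal-limit} exists and is preserved by transposition, so
\[
 P_g^{\mathrm R}(+\infty,\xi)^T=P_{\overline g}^{\mathrm L}(+\infty,-\xi).
\]
Transposition fixes the $(1,1)$ entry. Therefore $a_g^{\mathrm R}(\xi)$ equals the $(1,1)$ entry of $P_{\overline g}^{\mathrm L}(+\infty,-\xi)$, which is $a_{\overline g}(-\xi)$ by the identification of $a$ with the first column made just before the lemma.

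The only delicate point is bookkeeping. One must check that the signs of the exponentials and the placement of the conjugations in $W_g^T$ match the left system exactly under $g\mapsto\overline g$, $\xi\mapsto-\xi$. One must also make sure the left system with an $L^1$ (not necessarily compactly supported) potential is interpreted through the same Volterra equation, so that its terminal value is defined as a limit. Both points are routine given the estimates already recorded, so I expect no real obstacle.
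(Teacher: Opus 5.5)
Your proof is correct and is essentially the paper's argument: transpose the right Volterra equation, recognize $W_g(x,\xi)^T$ as the left coefficient matrix for the potential $\overline g$ at frequency $-\xi$, invoke uniqueness to get $(P_g^{\mathrm R})^T=P_{\overline g}^{\mathrm L}(\cdot,-\xi)$, and read off the $(1,1)$ entry at $+\infty$. The ``conjugate by a fixed matrix'' step announced in your first sentence is never used and can be dropped, since transposition alone does the job.
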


\begin{proof}
Transpose \eqref{eq:right-ODE}.  Then
\[
 \partial_x(P_g^{\mathrm R})^T
 =W_g(x,\xi)^T(P_g^{\mathrm R})^T,
\]
with
\[
 W_g(x,\xi)^T
 =\begin{pmatrix}
 0&g(x)\e^{-2\pi ix\xi}\\
 \overline{g(x)}\e^{2\pi ix\xi}&0
 \end{pmatrix}.
\]
This is the left coefficient matrix in \eqref{eq:dirac-left} for
the potential $\overline g$ and frequency $-\xi$.  Both systems have the
same initial matrix $I$.  Uniqueness of the Volterra equation implies
$(P_g^{\mathrm R})^T=P_{\overline g}^{\mathrm L}(\cdot,-\xi)$, and taking the
$(1,1)$ entry gives \eqref{eq:right-left-exact}.
\end{proof}

\subsection{Approximation by discrete \texorpdfstring{$\SU$}{SU(1,1)} factors}
We first assume $g\in C_c(\R)$.  If $g\equiv0$, every assertion below is
trivial, so assume $g\not\equiv0$.  Choose $R>0$ so that
$\supp g\subset(-R,R)$, and set
\[
 \rho:=\operatorname{dist}\bigl(\supp g,\R\setminus(-R,R)\bigr)>0,
 \qquad
 h_0:=\min\left\{1,\frac{\rho}{2},\frac{1}{2\norm g_\infty}\right\}.
\]
For $0<h\le h_0$, define
\[
 n_-(h):=\left\lceil-\frac Rh\right\rceil,
 \qquad
 n_+(h):=\left\lfloor\frac Rh\right\rfloor,
 \qquad x_n:=nh,
\]
and set
\begin{equation}\label{eq:Fnh}
 F_n^{(h)}:=
 \begin{cases}
  h\overline{g(nh)},&n_-(h)\le n\le n_+(h),\\
  0,&\text{otherwise}.
 \end{cases}
\end{equation}
The restriction $h\le h_0$ entails
$|F_n^{(h)}|\le h\norm g_\infty\le\tfrac12<1$.  It also ensures that
$x_{n_-(h)}$ lies strictly to the left of $\supp g$, whereas
$x_{n_+(h)+1}$ lies strictly to its right.  Let $P_h(t)$ be the finite
discrete product \eqref{eq:discrete-product} built from $F^{(h)}$, and let
$a_h(t)$ denote its $(1,1)$ entry.

We shall use the operator norm on $2\times2$ matrices.  The following
finite-product estimate is elementary but convenient.

\begin{lemma}\label{lem:product-estimate}
Let $X_1,\ldots,X_N$ and $Y_1,\ldots,Y_N$ be matrices satisfying
\[
 \norm{X_j},\norm{Y_j}\le1+Mh,
 \qquad
 \norm{X_j-Y_j}\le Ch^2,
 \qquad Nh\le L.
\]
Then
\begin{equation}\label{eq:telescoping-bound}
 \left\|\prod_{j=1}^NX_j-\prod_{j=1}^NY_j\right\|
 \le CLh\,\e^{2ML}.
\end{equation}
\end{lemma}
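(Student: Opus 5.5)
The plan is the standard telescoping identity for differences of ordered products, followed by a crude geometric bound on each term. First I write, with products ordered from left to right as in the paper,
\[
 \prod_{j=1}^NX_j-\prod_{j=1}^NY_j
 =\sum_{k=1}^N X_1\cdots X_{k-1}\,(X_k-Y_k)\,Y_{k+1}\cdots Y_N .
\]
The identity can be checked by induction on $N$. It also follows because the sum collapses: the $k$th term is $X_1\cdots X_kY_{k+1}\cdots Y_N-X_1\cdots X_{k-1}Y_k\cdots Y_N$, so consecutive terms cancel and only the two full products survive.

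Next I bound each summand by submultiplicativity of the operator norm:
\[
 \norm{X_1\cdots X_{k-1}(X_k-Y_k)Y_{k+1}\cdots Y_N}
 \le (1+Mh)^{k-1}\,Ch^2\,(1+Mh)^{N-k}
 \le Ch^2(1+Mh)^{N}.
\]
Here I assume $M\ge0$, which is implicit since $M$ is a growth bound. Using $1+Mh\le\e^{Mh}$ and $Nh\le L$ gives $(1+Mh)^N\le\e^{MNh}\le\e^{ML}$.

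Summing the $N$ terms, the total is at most $NCh^2\e^{ML}\le CLh\,\e^{ML}$. This is bounded by $CLh\,\e^{2ML}$, so the stated estimate \eqref{eq:telescoping-bound} holds with room to spare; the factor $2$ in the exponent is simply not needed.

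There is no real obstacle. The only point needing care is to keep the left-to-right order of the factors in the telescoping identity, since the matrices do not commute. The inequality $1+Mh\le\e^{Mh}$ requires nothing more than $Mh\ge0$.
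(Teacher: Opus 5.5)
Your proof is correct and follows the paper's argument: the same telescoping identity, then submultiplicativity of the operator norm and $(1+Mh)^N\le\e^{ML}$. The only difference is that you combine the left and right partial products into one factor $(1+Mh)^{N-1}$ and so get the sharper bound $CLh\,\e^{ML}$, whereas the paper bounds each partial product separately by $\e^{ML}$ and therefore obtains $\e^{2ML}$.
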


\begin{proof}
We have 
\[
 \prod_{j=1}^NX_j-\prod_{j=1}^NY_j
 =\sum_{k=1}^N
 \left(\prod_{j<k}X_j\right)(X_k-Y_k)
 \left(\prod_{j>k}Y_j\right).
\]
The norm of every partial product is at most
$(1+Mh)^N\le\e^{ML}$.  Summing $N$ terms of size at most
$Ch^2\e^{2ML}$ and using $Nh\le L$ proves
\eqref{eq:telescoping-bound}.
\end{proof}

\begin{proposition}\label{prop:Euler}
Let $g\in C_c(\R)$.  For every compact interval $K\subset\R$,
\begin{equation}\label{eq:matrix-convergence}
 \sup_{\xi\in K}
 \norm{P_h(h\xi)-P_g^{\mathrm R}(+\infty,\xi)}\longrightarrow0,
 \quad h\downarrow0.
\end{equation}
In particular,
\begin{equation}\label{eq:a-convergence}
 \sup_{\xi\in K}|a_h(h\xi)-a_g^{\mathrm R}(\xi)|\longrightarrow0,  \quad h\downarrow0.
\end{equation}
\end{proposition}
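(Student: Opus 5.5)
We will compare the discrete product $P_h(h\xi)$ with the right Dirac propagator sampled on the grid $x_n=nh$, one factor at a time, and then sum the local errors with Lemma~\ref{lem:product-estimate}.

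\emph{Step 1: the discrete factors.} At $t=h\xi$ the $n$th factor of \eqref{eq:discrete-product} is
\[
 X_n:=\begin{pmatrix}A_n&A_nh\overline{g(nh)}\e^{2\pi i nh\xi}\\ A_nhg(nh)\e^{-2\pi inh\xi}&A_n\end{pmatrix},
 \qquad A_n=(1-h^2|g(nh)|^2)^{-1/2}.
\]
Since $A_n=1+O(h^2\norm g_\infty^2)$, we get
\[
 X_n=I+hW_g(nh,\xi)+O(h^2),
\]
and $\norm{X_n}\le1+Mh$ with $M$ depending only on $\norm g_\infty$. The product is taken left to right as $n$ increases, which matches the right-multiplication structure of \eqref{eq:right-ODE}. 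This is exactly why the right flow $P_g^{\mathrm R}$ is the correct comparison object.

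\emph{Step 2: the continuous factors.} Let $Y_n:=P_g^{\mathrm R}(x_{n-1},\xi)^{-1}P_g^{\mathrm R}(x_n,\xi)$ be the right propagator from $x_{n-1}$ to $x_n$. These are well defined since $P^{\mathrm R}\in\SU$. Then
\[
 \prod_{n=n_-+1}^{n_++1}Y_n=P_g^{\mathrm R}(+\infty,\xi),
\]
because $g$ vanishes outside $(x_{n_-},x_{n_++1})$. From \eqref{eq:right-Volterra} and Gronwall we have $\norm{Y_n}\le\e^{h\norm g_\infty}\le1+Mh$. One Volterra iteration gives
\[
 Y_n=I+\int_{x_{n-1}}^{x_n}W_g(s,\xi)\,\dd s+O(h^2\norm g_\infty^2).
\]
The integral differs from $hW_g(x_n,\xi)$ by at most
\[
 h\,\omega(h)+h^2\cdot2\pi|\xi|\norm g_\infty,
\]
where $\omega$ is the modulus of continuity of $g$. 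The factor $2\pi|\xi|\norm g_\infty$ comes from the phase $\e^{\mp2\pi is\xi}$, and it is bounded uniformly for $\xi\in K$.

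\emph{Step 3: summing the errors.} The index shift must be handled: the discrete factor at $n$ is paired with $Y_n$ on $[x_{n-1},x_n]$. The boundary factors $X_{n_-}$ and $Y_{n_++1}$ are both identity up to $O(h^2)$ or exactly identity, since $g$ vanishes near $\pm R$. So the index ranges can be aligned with an extra identity factor at each end.

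The main obstacle is that continuity of $g$ alone gives only a local error of $h\,\omega(h)+Ch^2$, not $Ch^2$. So Lemma~\ref{lem:product-estimate} cannot be applied verbatim. I would rerun its telescoping proof with per-factor error $\varepsilon_h:=h\omega(h)+C_Kh^2$. With $N\le2R/h+2$ factors this yields
\[
 \norm{P_h(h\xi)-P_g^{\mathrm R}(+\infty,\xi)}\le(2R/h+2)\,\varepsilon_h\,\e^{2M(2R+2)}\to0,
\]
uniformly in $\xi\in K$. This proves \eqref{eq:matrix-convergence}, and \eqref{eq:a-convergence} follows by reading off the $(1,1)$ entry.
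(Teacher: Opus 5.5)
Your argument is correct and is essentially the paper's. The paper proceeds in two stages: it first compares $P_h(h\xi)$ with the Euler product $Q_h(\xi)$ via Lemma \ref{lem:product-estimate}, and then compares $Q_h$ with the flow using a local truncation bound of the same order $h\,\omega_g(h)+O_K(h^2)$ together with a discrete Gronwall recursion; you fold both stages into a single telescoping sum against the exact short-step propagators $Y_n$. Also, Lemma \ref{lem:product-estimate} does apply verbatim: nothing in it prevents taking the $h$-dependent constant $C=\omega(h)/h+C_K$, so you do not need to rerun its proof.
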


\begin{proof}
For $n_-(h)\le n\le n_+(h)$, the $n$th discrete factor evaluated at
$t=h\xi$ is
\begin{align}
 M_{n,h}(\xi)
 &:=\frac1{\sqrt{1-h^2|g(nh)|^2}}
 \begin{pmatrix}
  1&h\overline{g(nh)}\e^{2\pi inh\xi}\\
  hg(nh)\e^{-2\pi inh\xi}&1
 \end{pmatrix}.
 \label{eq:Mn}
\end{align}
Put $A_{n,h}:=(1-h^2|g(nh)|^2)^{-1/2}$.  Comparing \eqref{eq:Mn}
with \eqref{eq:Wdef}, we have the exact factorization
\begin{equation}\label{eq:Mn-factored}
 M_{n,h}(\xi)=A_{n,h}\bigl(I+hW_g(nh,\xi)\bigr).
\end{equation}
Thus the entire discrepancy between $M_{n,h}$ and the Euler factor
$I+hW_g(nh,\xi)$ is carried by the scalar $A_{n,h}$.

For $0<h\le h_0$ we have
$0\le h^2|g(nh)|^2\le\tfrac14$.  Since
$1\le(1-y)^{-1/2}\le1+y$ for $0\le y\le\tfrac14$,
\begin{equation}\label{eq:A-close-to-1}
 0\le A_{n,h}-1\le h^2\norm g_\infty^2.
\end{equation}
Moreover, $W_g(x,\xi)$ is Hermitian with eigenvalues $\pm|g(x)|$, and hence
\[
 \norm{I+hW_g(nh,\xi)}=1+h|g(nh)|\le\tfrac32.
\]
It follows from \eqref{eq:Mn-factored} and \eqref{eq:A-close-to-1} that
\begin{equation}\label{eq:factor-expansion}
 M_{n,h}(\xi)=I+hW_g(nh,\xi)+R_{n,h}(\xi),
 \qquad
 \norm{R_{n,h}(\xi)}\le\tfrac32\norm g_\infty^2h^2.
\end{equation}
The same estimates, together with $h\le1$, also give a constant
$M_g<\infty$, depending only on $\norm g_\infty$, such that
\begin{equation}\label{eq:factor-norm-bounds}
 \norm{M_{n,h}(\xi)},\ \norm{I+hW_g(nh,\xi)}\le1+M_gh,
 \qquad
 \norm{R_{n,h}(\xi)}\le\tfrac32\norm g_\infty^2h^2.
\end{equation}
These bounds are uniform in $n$, $\xi$, and $h\le h_0$.

Let
\begin{equation}\label{eq:Qh}
 Q_h(\xi):=
 \prod_{n=n_-(h)}^{n_+(h)}\!{}^{\longrightarrow}
 \bigl(I+hW_g(nh,\xi)\bigr).
\end{equation}
Inserting identity factors at those indices where $g(nh)=0$ shows that the
corresponding product of the matrices $M_{n,h}(\xi)$ is $P_h(h\xi)$.
The number $N_h:=n_+(h)-n_-(h)+1$ of factors satisfies
$N_hh\le2R+h\le2R+1$.  Lemma \ref{lem:product-estimate},
\eqref{eq:factor-expansion}, and \eqref{eq:factor-norm-bounds} therefore give
\begin{equation}\label{eq:P-Q-error}
 \sup_{\xi\in\R}\norm{P_h(h\xi)-Q_h(\xi)}\le C_{R,g}h.
\end{equation}

It remains to compare $Q_h$ with the solution of \eqref{eq:right-ODE}.
Write $Y(x,\xi):=P_g^{\mathrm R}(x,\xi)$, and define the partial Euler products
by
\[
 Z_{n_-(h)}(\xi):=I,
 \qquad
 Z_{n+1}(\xi):=Z_n(\xi)\bigl(I+hW_g(x_n,\xi)\bigr),
 \quad n_-(h)\le n\le n_+(h).
\]
Then $Z_{n_+(h)+1}=Q_h$.  By the choice of $h_0$ and the support margin
$\rho$, we also have
\begin{equation}\label{eq:grid-terminal-identification}
 Y(x_{n_-(h)},\xi)=I,
 \qquad
 Y(x_{n_+(h)+1},\xi)=P_g^{\mathrm R}(+\infty,\xi).
\end{equation}
For $n_-(h)\le n\le n_+(h)$, the Volterra equation gives
\begin{equation}\label{eq:Volterra-Y}
 Y(x_{n+1},\xi)
 =Y(x_n,\xi)+\int_{x_n}^{x_{n+1}}Y(s,\xi)W_g(s,\xi)\dd s.
\end{equation}
Subtracting $Y(x_n,\xi)(I+hW_g(x_n,\xi))$ and using
$\norm{Y(s,\xi)}\le\e^{\norm g_1}=:\Gamma_g$, we obtain, uniformly for
$\xi$ in a fixed compact interval $K$,
\begin{align}
&\norm{Y(x_{n+1},\xi)
 -Y(x_n,\xi)(I+hW_g(x_n,\xi))}
 \notag\\
&\quad\le
 \int_{x_n}^{x_{n+1}}
 \norm{Y(s,\xi)-Y(x_n,\xi)}\,\norm{W_g(s,\xi)}\dd s
 \notag\\
&\qquad+
 \Gamma_g\int_{x_n}^{x_{n+1}}
 \norm{W_g(s,\xi)-W_g(x_n,\xi)}\dd s.
 \label{eq:local-Euler}
\end{align}
The first term is at most $C_gh^2$.  Let
\[
 \omega_g(h):=\sup_{|x-y|\le h}|g(x)-g(y)|,
\]
which tends to zero because $g$ is uniformly continuous.  If
$K\subset[-M,M]$, then the elementary bound
$|\e^{2\pi is\xi}-\e^{2\pi ix_n\xi}|\le2\pi M|s-x_n|$ yields
\begin{equation}\label{eq:W-modulus}
 \sup_{\substack{|s-x_n|\le h\\ \xi\in K}}
 \norm{W_g(s,\xi)-W_g(x_n,\xi)}
 \le \omega_g(h)+2\pi Mh\norm g_\infty.
\end{equation}
Consequently, the local truncation error in \eqref{eq:local-Euler} is bounded
by
\begin{equation}\label{eq:local-error-final}
 C_{K,g}h\bigl(\omega_g(h)+h\bigr).
\end{equation}

Set $E_n(\xi):=Z_n(\xi)-Y(x_n,\xi)$.  Equations
\eqref{eq:local-error-final} and \eqref{eq:factor-norm-bounds} imply
\[
 \norm{E_{n+1}(\xi)}
 \le(1+h\norm g_\infty)\norm{E_n(\xi)}
 +C_{K,g}h\bigl(\omega_g(h)+h\bigr).
\]
The initial error is zero by \eqref{eq:grid-terminal-identification}, and
$N_hh\le2R+1$.  A discrete Gronwall recursion therefore gives
\begin{equation}\label{eq:Euler-global}
 \sup_{\xi\in K}
 \norm{Q_h(\xi)-P_g^{\mathrm R}(+\infty,\xi)}
 \le C_{R,K,g}\bigl(\omega_g(h)+h\bigr)\longrightarrow0.
\end{equation}
Combining \eqref{eq:P-Q-error} and \eqref{eq:Euler-global} proves
\eqref{eq:matrix-convergence}, and hence \eqref{eq:a-convergence}.
\end{proof}

\subsection{Passage to the limit}
We first prove Theorem \ref{thm:continuous} for $g\in C_c(\R)$ in the right
normalization.  Let
\begin{equation}\label{eq:Hh}
 H_h(t):=\bigl(\log|a_h(t)|^2\bigr)^{1/2}.
\end{equation}
For $1<p<2$ and $q=p'$, Theorem \ref{thm:discrete} gives
\begin{equation}\label{eq:discrete-h}
 \norm{H_h}_{L^q(\T)}
 \le
 \left(\sum_n[-\log(1-h^2|g(nh)|^2)]^{p/2}\right)^{1/p}.
\end{equation}
Represent $\T$ by $[-1/2,1/2]$ and set
$I_h=[-1/(2h),1/(2h)]$.  The change of variables $t=h\xi$ gives
\begin{equation}\label{eq:scaled-Hh}
 \norm{H_h(h\cdot)}_{L^q(I_h)}
 \le
 h^{-1/q}
 \left(\sum_n[-\log(1-h^2|g(nh)|^2)]^{p/2}\right)^{1/p}.
\end{equation}
We now compute the limit of the right-hand side.  Uniformly in the finitely
many relevant $n$,
\begin{equation}\label{eq:log-expand}
 -\log(1-h^2|g(nh)|^2)
 =h^2|g(nh)|^2\bigl(1+O(h^2\norm g_\infty^2)\bigr).
\end{equation}
Therefore
\begin{align}
&h^{-p/q}
 \sum_n[-\log(1-h^2|g(nh)|^2)]^{p/2}
 \notag\\
&\quad=
 h^{-p/q}h^p
 \sum_n|g(nh)|^p
 \bigl(1+O(h^2\norm g_\infty^2)\bigr)
 \notag\\
&\quad=
 h\sum_n|g(nh)|^p
 +O(h^2\norm g_\infty^2)
 \left(h\sum_n|g(nh)|^p\right),
 \label{eq:RHS-limit}
\end{align}
where we used $p/q=p-1$.  Since $g$ is continuous and compactly supported,
the Riemann sums converge, so
\begin{equation}\label{eq:RHS-conv}
 h^{-1/q}
 \left(\sum_n[-\log(1-h^2|g(nh)|^2)]^{p/2}\right)^{1/p}
 \longrightarrow\norm g_{L^p(\R)}.
\end{equation}

Let
\[
 H_g^{\mathrm R}(\xi)
 :=\bigl(\log|a_g^{\mathrm R}(\xi)|^2\bigr)^{1/2}.
\]
For any fixed $M>0$, Proposition \ref{prop:Euler} gives uniform convergence
of $a_h(h\xi)$ to $a_g^{\mathrm R}(\xi)$ on $[-M,M]$.  The limiting function
is bounded there, and uniform convergence places all $a_h(h\xi)$, for small
$h$, in a common compact subset of $\{z:|z|\ge1\}$.  Since every relevant
matrix belongs to $\SU$ and the map
$z\mapsto(\log|z|^2)^{1/2}$ is uniformly continuous on that compact set, we
have uniform convergence
\begin{equation}\label{eq:Hlocaluniform}
 H_h(h\cdot)\longrightarrow H_g^{\mathrm R}
 \quad\text{on }[-M,M].
\end{equation}
For sufficiently small $h$, $[-M,M]\subset I_h$.  Hence
\eqref{eq:scaled-Hh}, \eqref{eq:RHS-conv} and
\eqref{eq:Hlocaluniform} imply
\begin{equation}\label{eq:local-cont-est}
 \norm{H_g^{\mathrm R}}_{L^q([-M,M])}\le\norm g_p.
\end{equation}
Letting $M\to\infty$ and using monotone convergence gives
\begin{equation}\label{eq:right-cont-p}
 \norm{H_g^{\mathrm R}}_{L^q(\R)}\le\norm g_p,
 \quad1<p<2.
\end{equation}

For $p=1$, the discrete theorem gives
\[
 \norm{H_h}_{L^\infty(\T)}
 \le\sum_n[-\log(1-h^2|g(nh)|^2)]^{1/2}.
\]
After the same scaling, the right-hand side converges to
$\int|g|$.  From the local uniform convergence we obtain for every $M$
\[
 \norm{H_g^{\mathrm R}}_{L^\infty([-M,M])}
 \le\liminf_{h\downarrow0}\norm{H_h(h\cdot)}_{L^\infty(I_h)}
 \le\norm g_1.
\]
Letting $M\to\infty$ yields the $p=1$ endpoint.  For $p=2$ we use the
nonlinear Plancherel identity \eqref{eq:nonlin-plancherel} (see
\cite{TT}).

By Lemma \ref{lem:left-right}, complex conjugation of the potential and
reflection $\xi\mapsto-\xi$ identify the right and left transmission
coefficients without changing any $L^p$ norm.  Thus Theorem
\ref{thm:continuous} holds for $C_c(\R)$.

\subsection{Extension from \texorpdfstring{$C_c$ to $L^1\cap L^p$}{compact support to L1 intersection Lp}}
Here we record the $L^1$ stability estimate needed for approximation.

\begin{lemma}\label{lem:L1stability}
Let $g,\widetilde g\in L^1(\R)$. Then
\begin{equation}\label{eq:L1stability}
 \sup_{x,\xi}
 \norm{P_g^{\mathrm R}(x,\xi)-P_{\widetilde g}^{\mathrm R}(x,\xi)}
 \le
 \e^{\norm{g}_{L^1}+\norm{\widetilde g}_{L^1}}
 \norm{g-\widetilde g}_{L^1}.
\end{equation}
Consequently,
\begin{equation}\label{eq:L1stability-terminal}
 \sup_{\xi\in\R}
 \norm{
 P_g^{\mathrm R}(+\infty,\xi)
 -
 P_{\widetilde g}^{\mathrm R}(+\infty,\xi)}
 \le
 \e^{\norm{g}_{L^1}+\norm{\widetilde g}_{L^1}}
 \norm{g-\widetilde g}_{L^1}.
\end{equation}
\end{lemma}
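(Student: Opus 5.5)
The estimate follows from a Duhamel-type identity for the difference of the two right evolutions, combined with the Gronwall bound \eqref{eq:right-basic-bound}. For brevity, write $P:=P_g^{\mathrm R}(\cdot,\xi)$ and $\widetilde P:=P_{\widetilde g}^{\mathrm R}(\cdot,\xi)$, and note that $W_g-W_{\widetilde g}=W_{g-\widetilde g}$ because $W$ depends linearly on the potential. Since both $P$ and $\widetilde P$ are absolutely continuous in $x$, invertible (they lie in $\SU$), and tend to $I$ at $-\infty$, I would differentiate $P(s)\widetilde P(s)^{-1}$. Using \eqref{eq:right-ODE} for both, and $\partial_s\widetilde P^{-1}=-W_{\widetilde g}\widetilde P^{-1}$, one gets
\[
\partial_s\bigl(P(s)\widetilde P(s)^{-1}\bigr)=P(s)\bigl(W_g(s,\xi)-W_{\widetilde g}(s,\xi)\bigr)\widetilde P(s)^{-1}.
\]
Integrating from $-\infty$ to $x$ and multiplying on the right by $\widetilde P(x)$ gives the Duhamel formula
\[
P(x)-\widetilde P(x)=\int_{-\infty}^x P(s)\,W_{g-\widetilde g}(s,\xi)\,\widetilde P(s)^{-1}\widetilde P(x)\,\dd s .
\]

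To bound the integrand, I would use the \emph{propagator} $\widetilde P(s)^{-1}\widetilde P(x)$. It solves the same right equation started at $s$, and Gronwall gives $\|\widetilde P(s)^{-1}\widetilde P(x)\|\le\exp\int_s^x|\widetilde g|\le \e^{\|\widetilde g\|_1}$. Combining this with $\|P(s)\|\le \e^{\|g\|_1}$ from \eqref{eq:right-basic-bound} and $\|W_{g-\widetilde g}(s,\xi)\|=|g(s)-\widetilde g(s)|$ yields \eqref{eq:L1stability} uniformly in $x$ and $\xi$.

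A cleaner alternative, which avoids inverses, is to subtract the Volterra equations \eqref{eq:right-Volterra} directly:
\[
P(x)-\widetilde P(x)=\int_{-\infty}^x P(s)\,W_{g-\widetilde g}(s)\,\dd s+\int_{-\infty}^x\bigl(P(s)-\widetilde P(s)\bigr)W_{\widetilde g}(s)\,\dd s .
\]
Set $E(x):=\|P(x)-\widetilde P(x)\|$. Then $E(x)\le \e^{\|g\|_1}\|g-\widetilde g\|_1+\int_{-\infty}^x E(s)|\widetilde g(s)|\,\dd s$. The integral Gronwall inequality, valid since $E$ is bounded and continuous, gives $E(x)\le \e^{\|g\|_1}\|g-\widetilde g\|_1\,\e^{\|\widetilde g\|_1}$, which is exactly \eqref{eq:L1stability}. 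I would present this second route.

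For \eqref{eq:L1stability-terminal}, I would let $x\to+\infty$ in \eqref{eq:L1stability}. The existence of both terminal limits in matrix norm is guaranteed by \eqref{eq:right-terminal-limit}, and a uniform bound passes to the limit.

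The main technical point is justifying Gronwall on the half-line starting at $-\infty$. This is handled by applying it on $[-T,x]$, where the error at $-T$ tends to zero as $T\to\infty$ since both solutions tend to $I$, and then letting $T\to\infty$. Alternatively, one can use the standard form with the bounded forcing term $\e^{\|g\|_1}\|g-\widetilde g\|_{L^1}$ directly. The rest is routine.
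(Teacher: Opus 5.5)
Your proposal is correct, and the route you say you would present is essentially the paper's proof: subtract the two Volterra equations, add and subtract $P_g^{\mathrm R}W_{\widetilde g}$ so the forcing term is bounded by $\e^{\norm{g}_{L^1}}\norm{g-\widetilde g}_{L^1}$, apply Gronwall, and let $x\to+\infty$. Your Duhamel/propagator variant is also valid and gives the same constant.
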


\begin{proof}
Both matrices solve the Volterra equation \eqref{eq:right-Volterra}, and by
\eqref{eq:right-basic-bound},
\begin{equation}\label{eq:Pg-bound}
 \sup_{x,\xi}\norm{P_g^{\mathrm R}(x,\xi)}\le\e^{\norm{g}_{L^1}},
 \qquad
 \sup_{x,\xi}\norm{P_{\widetilde g}^{\mathrm R}(x,\xi)}
 \le\e^{\norm{\widetilde g}_{L^1}}.
\end{equation}
Subtracting the two equations and adding and subtracting
$P_g^{\mathrm R}W_{\widetilde g}$ gives, for all $x$ and $\xi$,
\begin{align*}
 \norm{P_g^{\mathrm R}(x,\xi)-P_{\widetilde g}^{\mathrm R}(x,\xi)}
 &\le
 \int_{-\infty}^x
 \norm{P_g^{\mathrm R}(s,\xi)-P_{\widetilde g}^{\mathrm R}(s,\xi)}\,
 |\widetilde g(s)|\dd s\\
 &\quad+
 \e^{\norm{g}_{L^1}}\int_{-\infty}^x|g(s)-\widetilde g(s)|\dd s.
\end{align*}
A second application of Gronwall's inequality yields \eqref{eq:L1stability}.
Since both terminal matrices exist as limits \eqref{eq:right-terminal-limit},
letting $x\to+\infty$ in \eqref{eq:L1stability} gives
\eqref{eq:L1stability-terminal}.
\end{proof}

\begin{proof}[Completion of the proof of Theorem \ref{thm:continuous}]
Let $f\in L^1(\R)\cap L^p(\R)$, $1\le p<2$.  Choose
$f_m\in C_c(\R)$ such that
\begin{equation}\label{eq:approx-f}
 \norm{f_m-f}_1+\norm{f_m-f}_p\longrightarrow0.
\end{equation}
By \eqref{eq:L1stability-terminal}, the terminal matrices
$P^{\mathrm R}_{f_m}(+\infty,\cdot)$ converge to
$P^{\mathrm R}_{f}(+\infty,\cdot)$ uniformly in $\xi$.  Lemma
\ref{lem:left-right} gives the same conclusion for the left transmission
coefficients.  Moreover, \eqref{eq:right-basic-bound} and
\eqref{eq:approx-f} place $a_{f_m}(\xi)$ and $a_f(\xi)$ in a common compact
annulus $\{z:1\le|z|\le C_f\}$.  Hence
\begin{equation}\label{eq:H-uniform-approx}
 \norm{H_{f_m}-H_f}_{L^\infty(\R)}\longrightarrow0.
\end{equation}

If $p=1$, the compactly supported case and
\eqref{eq:H-uniform-approx} give
\[
 \norm{H_f}_{L^\infty}
 =\lim_{m\to\infty}\norm{H_{f_m}}_{L^\infty}
 \le\lim_{m\to\infty}\norm{f_m}_{L^1}
 =\norm f_{L^1}.
\]
If $1<p<2$, then pointwise convergence, Fatou's lemma, and the compactly
supported estimate yield
\[
 \norm{H_f}_{L^{p'}}
 \le\liminf_{m\to\infty}\norm{H_{f_m}}_{L^{p'}}
 \le\lim_{m\to\infty}\norm{f_m}_{L^p}
 =\norm f_{L^p}.
\]
For $p=2$, we use the nonlinear Plancherel identity
\eqref{eq:nonlin-plancherel}. This proves the theorem.
\end{proof}

\subsection{Passage from \texorpdfstring{$L^1\cap L^p$ to $L^p$}{L1 cap Lp to Lp}}

\begin{proof}[Proof of Corollary \ref{cor:Lp}]
If $p=1$ then $L^1(\R)\cap L^p(\R)=L^p(\R)$ and the assertion is precisely
Theorem \ref{thm:continuous}.  Assume therefore $1<p<2$ and set
$q=p'\in(2,\infty)$.

Let $f\in L^p(\R)$ and let $f_R$ be the truncation \eqref{eq:truncation}.
Each $f_R$ belongs to $L^1(\R)\cap L^p(\R)$, so Theorem \ref{thm:continuous}
applies to it.  Since $|f_R|\le|f|$ pointwise, we obtain the bound
\begin{equation}\label{eq:trunc-bound}
 \int_\R\bigl(\log|a_{f_R}(\xi)|^2\bigr)^{q/2}\dd\xi
 \;\le\;\norm{f_R}_{L^p(\R)}^{\,q}
 \;\le\;\norm{f}_{L^p(\R)}^{\,q},
\end{equation}
uniformly in $R>0$.

Fix any sequence $R_m\to\infty$.  By \eqref{eq:CK-limit},
$a_{f_{R_m}}(\xi)\to a_f(\xi)$ for almost every $\xi\in\R$.  Each
$a_{f_{R_m}}$ satisfies $|a_{f_{R_m}}|\ge1$ by \eqref{eq:SU-conservation},
so $|a_f|\ge1$ almost everywhere as well.  Pointwise continuity of
$z\mapsto(\log|z|^2)^{1/2}$ on $\{|z|\ge1\}$ then gives
\begin{equation}\label{eq:H-ae}
 H_{f_{R_m}}(\xi)\longrightarrow H_f(\xi)
 \qquad\text{for a.e. }\xi\in\R,
\end{equation}
with $H$ as in \eqref{eq:Hf-def}.  The functions $H_{f_{R_m}}$ are
nonnegative and measurable, so Fatou's lemma applied along the sequence
$(R_m)$, together with \eqref{eq:H-ae} and \eqref{eq:trunc-bound}, gives
\[
 \int_\R H_f(\xi)^q\dd\xi
 \le\liminf_{m\to\infty}\int_\R H_{f_{R_m}}(\xi)^q\dd\xi
 \le\norm{f}_{L^p(\R)}^{\,q}.
\]
Taking $q$-th roots yields \eqref{eq:main-Lp}.
\end{proof}

\begin{remark}
Only the uniformity of the constant in Theorem \ref{thm:continuous} is used
here: the bound \eqref{eq:trunc-bound} is independent of $R$, which is 
what fails for the Christ--Kiselev constants $C_p$ as $p\uparrow2$.
\end{remark}

\begin{remark}
The expansion of the nonlinear reflection coefficient into multilinear
oscillatory integrals is never estimated.  Instead, all powers are packaged
into the single Hilbert vector $\Phi(s)$.  The nonlinear Möbius evolution is
an affine unitary transformation of this vector, and the cancellation
needed at $p=2$ becomes ordinary Hilbert-space orthogonality.  This is why
the endpoint obstruction of \cite{MTTcounter}, which concerns separate
multilinear pieces, is bypassed.
\end{remark}

\begin{remark}
For $1<p<2$ let
\[
 C_p^{\mathrm{sharp}}
 :=\sup\bigl\{\norm{H_f}_{L^{p'}(\R)}\big/\norm{f}_{L^p(\R)}\ :\
 f\in L^p(\R),\ f\ne0\bigr\}
\]
be the optimal constant in the normalization \eqref{eq:dirac-left}.
Corollary \ref{cor:Lp} gives $C_p^{\mathrm{sharp}}\le1$.  In the opposite
direction, let $G$ be a Gaussian.  The small-amplitude linearization of the
nonlinear transform gives
\[
 \lim_{\varepsilon\downarrow0}
 \frac{\norm{H_{\varepsilon G}}_{L^{p'}}}
 {\varepsilon\norm{G}_{L^p}}
 =\frac{\norm{\widehat G}_{L^{p'}}}{\norm G_{L^p}}
 =\mathbf B_p;
\]
see~\cite{KOR19} for this linearization and \cite{Beckner75} for the sharp
linear Hausdorff--Young inequality, whose extremizers include Gaussians.
Consequently,
\[
 \mathbf B_p\le C_p^{\mathrm{sharp}}\le1.
\]
Since $\mathbf B_p\to1$ as $p\uparrow2$, it follows that
$C_p^{\mathrm{sharp}}\to1$ as $p\uparrow2$.  Determining $C_p^{\mathrm{sharp}}$ for a
fixed $1<p<2$ remains a separate extremal problem; the perturbative theorem
of \cite{KOR19} shows that the nonlinear quotient is strictly smaller than
$\mathbf B_p$ in suitable small-data regimes.
\end{remark}

\subsection*{Acknowledgements}
The author is grateful to Erlan Nursultanov for many valuable discussions on Hausdorff--Young inequalities over the past four years.
The author acknowledge the use of AI tools. All mathematical arguments and proofs in the final manuscript were checked and written by the
author.

\subsection*{Funding}
This research is funded by Nazarbayev University under the grant 110326CRP0806 (D.S.).


\subsection*{Data Availability}
This manuscript has no associated data.

\end{document}